\documentclass[11pt,a4paper]{article} 
\usepackage[left=2.5cm, right=2.5cm, top=2.5cm]{geometry} 
\usepackage[utf8]{inputenc}
\usepackage{graphicx}
\bibliographystyle{siamplain.bst}
\usepackage[square,numbers]{natbib}

\usepackage{float}
\usepackage{amsmath,amsfonts,amsthm,bm,mathtools, amssymb}  
\usepackage[UKenglish]{babel}
\usepackage[UKenglish]{isodate}
\usepackage[nottoc]{tocbibind} 
\usepackage[hyphens]{url} 
\usepackage{enumerate}
\usepackage{booktabs}

\usepackage[toc,page,title,titletoc]{appendix}

\usepackage{pgfplots}
\pgfplotsset{compat=newest}

\usepackage{comment}


\usepackage[mathsymb=mathbb]{exmath}
\hidedetails

\usepackage[unicode]{hyperref} 
\usepackage[capitalize,nosort,nameinlink]{cleveref}

\DeclareSymbolFont{bbold}{U}{bbold}{m}{n}
\DeclareSymbolFontAlphabet{\mathbbold}{bbold}


\numberwithin{equation}{section} 
\numberwithin{figure}{section} 
\numberwithin{table}{section} 
\theoremstyle{definition}

\newtheorem{definition}{Definition}[section]

\newtheorem{theorem}[definition]{Theorem}
\crefname{theorem}{Theorem}{Theorems}
\AddToHook{env/theorem/begin}{\crefalias{definition}{theorem}}

\newtheorem{proposition}[definition]{Proposition}
\crefname{proposition}{Proposition}{Propositions}
\AddToHook{env/proposition/begin}{\crefalias{definition}{proposition}}

\newtheorem{lemma}[definition]{Lemma}
\crefname{lemma}{Lemma}{Lemmas}
\AddToHook{env/lemma/begin}{\crefalias{definition}{lemma}}

\newtheorem{remark}[definition]{Remark}
\crefname{remark}{Remark}{Remarks}
\AddToHook{env/remark/begin}{\crefalias{definition}{remark}}

\newtheorem{corollary}[definition]{Corollary}
\crefname{corollary}{Corollary}{Corollary}
\AddToHook{env/corollary/begin}{\crefalias{definition}{corollary}}

\newtheorem{example}[definition]{Example}
\crefname{example}{Example}{Examples}
\AddToHook{env/example/begin}{\crefalias{definition}{example}}

\crefname{condition}{Condition}{Conditions}
\AddToHook{env/condition/begin}{\crefalias{definition}{condition}}

\crefname{notation}{Notation}{Notations}
\AddToHook{env/notation/begin}{\crefalias{definition}{notation}}
\newtheorem{assumption}[definition]{Assumption}
\crefname{assumption}{Assumption}{Assumptions}
\AddToHook{env/assumption/begin}{\crefalias{definition}{assumption}}

\crefname{property}{Property}{Propertys}
\AddToHook{env/property/begin}{\crefalias{definition}{property}}

\makeatletter
\let\c@table\c@figure 
\let\ftype@table\ftype@figure 
\makeatother

\makeatletter
\newcommand{\eqnum}{\hfill\refstepcounter{equation}\textup{\tagform@{\theequation}}}
\makeatother

\usepackage{xpatch}
\makeatletter
\xpatchcmd{\@thm}{\thm@headpunct{.}}{\thm@headpunct{}}{}{}
\makeatother

\newcommand*{\myin}{$\,$\mathord{\in}$\,$}


\crefname{equation}{}{}

\usepackage{listings}
\lstset{basicstyle=\small\ttfamily,columns=flexible,breaklines=true}

\usepackage[font=small,skip=0.1pt,belowskip=-10pt]{caption}

\AtBeginEnvironment{appendices}{%
 \titleformat{\section}{\bfseries\Large}{\appendixname~\thesection:}{0.5em}{}%
 \titleformat{\subsection}{\bfseries\large}{\thesubsection}{0.5em}{}%
\counterwithin{equation}{section}
}

\usepackage{hhline}

\usepackage{enumitem}

\usepackage{algorithm}
\usepackage[noend]{algpseudocode}

\usepackage{subcaption}

\def\ind#1{\mathbbold{1}_{#1}}

\newcommand\numberthis{\addtocounter{equation}{1}\tag{\theequation}}




\title{
{An Adaptive Sampling Algorithm for Level-set Approximation}
}
\author{Matteo Croci, Abdul-Lateef Haji-Ali, and Ian C. J. Powell}
\date{}

\allowdisplaybreaks

\begin{document}

\maketitle


\begin{abstract}
We propose a new numerical scheme for approximating level-sets of Lipschitz multivariate functions which is robust to stochastic noise.
The algorithm's main feature is an adaptive grid-based stochastic approximation strategy which automatically refines the approximation over regions close to the level set.
This strategy combines a local function approximation method with a noise reduction scheme and produces $\varepsilon$-accurate approximations with an expected cost complexity reduction of $\varepsilon^{-\left(\frac{p+1}{\alpha p}\right)}$ compared to a non-adaptive scheme, where $\alpha$ is the convergence rate of the function approximation method and we assume that the noise can be controlled in $L^p$.
We provide numerical experiments in support of our theoretical findings. These include 2- and 3-dimensional functions with a complex level set structure, as well as a failure region estimation problem described by a hyperelasticity partial differential equation with random field coefficients.
\end{abstract}


\section{Introduction}

This paper proposes an efficient, grid-based adaptive sampling algorithm for approximating zero level-sets of the form
\begin{equation} \label{eq:level_set_form}
    \mathcal{L}_0 := \{{x} \in \bar{D} : f(x) = 0\}
\end{equation}
where $D\subset\joinrel\subset\mathbb{R}^d$ is a $d$-dimensional domain with compact closure and a
sufficiently smooth boundary, and the Lipschitz function $f:D \rightarrow \mathbb{R}$ can either
be directly evaluated pointwise or can be estimated pointwise by a (random) approximation. 
More concretely, we assume that for a fixed $p\geq1$ there exist (random) approximations of $f$ for which the maximum $L^p$ error of pointwise evaluations can be controlled for some known cost. 
The adaptive sampling algorithm presented here is most effective when the cost of pointwise evaluations/estimations at the desired error tolerance is high.

Level-set estimation is central in a wide range of practical applications, including solar cell manufacturing \cite{inatsu2024active}, environmental risk estimation \cite{gotovos2013active}, the approximation of failure domains in uncertainty quantification \citep{elfverson2016multilevel,chaudhuri2021mfegra}, image segmentation and object extraction in computer vision \citep{osher2003level,willett2007minimax}, and the approximation of moving interfaces in computational mathematics and engineering \citep{min2007second,xu2004adaptive}.
Failure domains are often characterised as the region in which a given function exceeds some failure threshold \citep{elfverson2016multilevel,PeherstorferMultifidelity2018} and thus their approximation can be thought of as level-set estimation problems. This means that level-set estimation is crucial in applications such as engineering and manufacturing design where failure regions are of interest.
Another practical use of level-set estimation arises in scientific and engineering problems that can be formulated as moving free boundary models \citep{min2007second}, where at each timestep in a numerical scheme the location of the boundary must be (approximately) determined. Although explicit boundary estimation methods are available, level-set methods are widely used in practice. These approaches implicitly define the boundary as the level set of an evolving function \citep{xu2004adaptive}. Level-set techniques have found successful applications in diverse areas, including fluid mechanics \citep{sussman1999adaptive}, medical imaging \citep{osher2003level}, and topology optimisation \citep{li2021full}, highlighting their broad applicability.

When pointwise evaluations are expensive or noisy, level-set estimation typically becomes computationally prohibitive and thus adaptive techniques in both space and probability are required to minimize costs and make the estimation feasible.
The main contributions of this paper are as follows:
\begin{itemize}
    \item a new adaptive algorithm for level-set estimation which accounts for noise,
    \item a complete complexity analysis of the adaptive level-set estimation algorithm which covers both deterministic and stochastic (noise-free) cases,
    \item comprehensive numerical results verifying that these theoretical complexity rates hold in practice.
\end{itemize}

The methodology proposed here is perhaps closest in spirit to the work presented in \citep{min2007second}. Indeed, the authors of \citep{min2007second} propose a level-set approximation method for noise-free functions based on adaptive spatial quadtree/octree interpolation. Our work extends well beyond \citep{min2007second} since it accounts for noisy evaluations, adaptive statistical approximation, general grids and cell shapes, and the use of any function approximation scheme.

The structure of the rest of this paper is as follows. \Cref{section:setup} outlines our assumptions and provides an overview of our algorithm. \Cref{section:complexity_analysis} provides an extensive complexity analysis. \Cref{section:numerics} provides numerical results corroborating our complexity analysis. \Cref{section:conclusion} gives concluding remarks.

\section{Problem Setup and Algorithm Overview} \label{section:setup}

In this section we outline the general setup of our adaptive sampling algorithm for level-set estimation. The bulk of this section focuses on stating the assumptions utilised in the complexity analysis in \cref{section:complexity_analysis}, whilst \cref{subsection:adaptive_algorithm} outlines the form of the adaptive sampling algorithm itself.

We first introduce a family of (random) approximations of the function $f$, $\{ \Tilde{f}_\ell \}_{\ell \in \mathbb{N}}$ defined over a probability space \((\Omega, \mathcal F, \mathbb P)\). Here \(\ell \in \mathbb{N}\) denotes the ``level'' of approximation which corresponds to higher accuracy and cost as \(\ell\) increases. We assume that everywhere in $\bar{D}$ the point
evaluations of the $\{ \Tilde{f}_\ell \}_{\ell \in \mathbb{N}}$ are well-defined and have finite $p$-th moment (in probability).
In order to formalize this requirement, we introduce the Banach space of
everywhere bounded deterministic real-valued functions $u:\bar{D} \rightarrow \mathbb{R}$, which
we denote by $\mathcal{B}(\bar{D},\mathbb{R})$, equipped with the maximum norm
\[
    \lVert u \rVert_{\mathcal{B}(\bar{D},\mathbb{R})} =
    \max_{x\in\bar{D}}|u(x)|.
\]
The space $\mathcal{B}(\bar{D},\mathbb{R})$ is essentially a weaker space than
$C^0(\bar{D})$, the space of continuous functions over $\bar{D}$. We introduce
it to model the fact that we need point evaluations of $\Tilde{f}_\ell$ to be
well-defined, but we cannot assume continuity since the noise in
$\Tilde{f}_\ell$ need not be continuous, for example when different point evaluations arise from independent Monte Carlo simulations.

Since, for each $\ell \in \mathbb{N}$, the approximation $\Tilde{f}_\ell:\bar{D}\times\Omega \rightarrow \mathbb{R}$ is
stochastic, we also require the stochastic version of the above space, namely the
Banach space $\mathcal{B}(\bar{D}, L^p(\Omega))$ of real-valued random fields with everywhere
(in space) bounded $p$-th moment (in probability). We equip $\mathcal{B}(\bar{D},
L^p(\Omega))$ with the norm
\begin{align*}
    \lVert u \rVert_{\mathcal{B}(\bar{D},L^p(\Omega))} =
    \max_{x\in\bar{D}}\mathbb{E}[|u(x)|^p]^{1/p}.
\end{align*}
To fix ideas, note that for $p=2$ this is the space of random fields with
everywhere defined finite variance. The above norm does not require that the $p$-th
moment of $\Tilde{f}_\ell$ varies continuously in space in order to be bounded, and thus models typical situations such as when each point evaluation arises from independent Monte Carlo estimators with possibly different statistical error tolerances.

We can now state our first assumption on $\Tilde{f}_\ell$:
\begin{assumption} \label{assumption:beta}
    For a fixed  $p \geq 1$, assume that $f$ is Lipschitz continuous over $\bar{D}$
    (which implies that $f\in\mathcal{B}(\bar{D},\mathbb{R})$) and that, for each level $\ell \in \mathbb{N}$, we have
    $\Tilde{f}_\ell\in\mathcal{B}(\bar{D}, L^p(\Omega))$ satisfying the bound
    \begin{equation}
            \lVert
                    f({x}) - \Tilde{f}_\ell({x})
                \rVert_{\mathcal{B}(\bar{D},L^p(\Omega))}
        \leq
            \sigma M_\ell^{-\beta}
    \end{equation}
    for some fixed, known rate $\beta > 0$ and (unknown) constant $\sigma \geq 0$, where $M_\ell$ is the cost of a pointwise evaluation of $\Tilde{f}_\ell$.
\end{assumption}

\begin{remark}
    The particular case of $\sigma = 0$ or $\beta \rightarrow \infty$ corresponds to the deterministic case wherein, for each point $x \in \bar{D}$, the function $f(x)$ can be directly evaluated at cost $\mathcal{O}(1)$. In this case the $M_\ell$'s are just constants and thus the subscripts can be safely omitted.
\end{remark}

This general structure occurs frequently in stochastic approximation problems. A typical case with noisy point-values arises when point evaluations are provided via stochastic approximation routines based on standard Monte Carlo methods, which we illustrate with the following simple example.

\begin{example}
    Suppose that, for each point $x \in \bar{D}$, pointwise evaluations are the result of a Monte Carlo approximation of the form
    \begin{equation*}
        \Tilde{f}_\ell({x}) = \frac{1}{M_\ell} \sum_{m = 1}^{M_\ell} g(x, Y^{(m)})
    \end{equation*}
    for some set of (finite variance) i.i.d. samples $\{g(x, Y^{(m)})\}_{m = 1}^{M_\ell}$ with mean $f(x)$ and a given hierarchy $\{M_\ell\}_\ell \subset \mathbb{N}$.
    Then, letting the cost of each sample be $\mathcal{O}(1)$, the cost of obtaining a pointwise evaluation is directly proportional the number of samples used, $M_\ell$. In this case \cref{assumption:beta} typically holds pointwise for $p=2$ and $\beta = 1/2$ \citep{kalos2009monte}.
\end{example}

We also make the following assumption on the behaviour of the function $f$ in a neighbourhood of the target level-set $\mathcal{L}_0$. This assumption bounds the size of sets obtained from inflating the target level-set by some parameter and is utilised throughout the complexity analysis in \cref{section:complexity_analysis}.

\begin{assumption} \label{assumption:f_int_bound}
    There exist some $\delta, \rho_0 > 0$ such that for all $0 < a < \delta$ we have
    \begin{equation*}
        \mu(x\in\bar{D} : |f(x)|\leq a) \leq \rho_0 a
    \end{equation*}
    where $\mu$ is the $d$-dimensional Lebesgue measure.
\end{assumption}

\begin{remark}
    Since we assume that $f$ is Lipschitz, \cref{assumption:f_int_bound} automatically follows from an assumption on the level set $\mathcal{L}_0$ being a finite union of $d-1$ dimensional rectifiable sets, for example a finite union of $d-1$ dimensional submanifolds of class $1$. It is reasonable to expect \cref{assumption:f_int_bound} to hold in practice since, by Theorem $3.2.15$ of \cite{federer2014geometric}, almost all level sets of Lipschitz functions mapping from $\bar{D}$ to $ \mathbb{R}$ are countable unions of $d-1$ dimensional rectifiable sets.
\end{remark}

\subsection{Local Approximation}

Our adaptive sampling scheme relies on a cell-based refinement structure with local, cell-wise approximations where sample complexity (and thus approximation accuracy) chosen relative to the size of the cell. To make this set-up clear we introduce the following notation.

\begin{definition}
    Let $N$ be the (fixed) number of points in space to be used for each local approximation.
    At a given approximation level $\ell \myin \mathbb{N}$, define:
    \begin{itemize}[leftmargin=1.5cm]
        \item $h_\ell$, the level-dependant, geometrically decreasing (e.g. $h_\ell = 2^{-\ell}$) cell-size;
        \item $U_\ell$, a uniform tessellation of $\bar{D}$ into a cells of size $h_\ell$;
    \end{itemize}
    Further, for each cell $\square \in U_\ell$ define:
    \begin{itemize}[leftmargin=1.5cm]
        \item $P_\ell^{\square}:\mathcal{B}({\square},
            \mathbb{R})\rightarrow\mathbb{R}^{N}$, a (deterministic and linear) point evaluation operator
            which evaluates a function at $N$ distinct point-values within the cell;
        \item $T_\ell^{\square}: \mathbb{R}^{N} \rightarrow C^{0}(\bar{\square})$,
            an approximation operator which constructs a (cell-wise continuous)
            function approximation from a length-$N$ vector of function point
            values;
        \item $I_\ell^{\square}: \mathcal{B}({\square},
            \mathbb{R}) \rightarrow  C^{0}(\bar{\square})$, an approximation operator given by $I_\ell^{\square} = T_\ell^{\square} P_\ell^{\square}$.
    \end{itemize}
\end{definition}

We require the operators $\{T_\ell^{\square}\}_{\ell \in \mathbb{N}, \square \in U_\ell}$ to be bounded as well as Lipschitz-continuous with respect to the $L^p(\square)$ norm:
\begin{assumption}
    \label{ass:continuity_of_T}
    For any $\ell \myin \mathbb{N}$ and over any cell $\square \in U_\ell$, the approximation
    operators $T_\ell^{\square}$ are continuous and bounded. Additionally, they are Lipschitz continuous with respect to the $L^p(\square)$ norm, i.e., for all $\ell$ there exist $K^\square_{\ell}>0$ satisfying
    \begin{align*}
        \lVert T_\ell^\square a - T_\ell^\square b \rVert_{L^p({\square})} \leq K^\square_\ell\lVert a - b \rVert_{p},\quad \forall a,b\in \mathbb{R}^N.
    \end{align*}
    Furthermore, the Lipschitz constants $K_\ell^\square$ satisfy
    \begin{align}
        \label{eq:sum_of_operator_norms_bdd}
        \left(\sum_{\square \in U_\ell}
        (K_\ell^\square)^p\right)^{1/p} \leq C_N
    \end{align}
    for some constant $C_N>0$ independent of the level $\ell$.
\end{assumption}
Note that \cref{eq:sum_of_operator_norms_bdd} amounts to requiring that the global approximation operator built by gathering all cell contributions is also Lipschitz-continuous with a mesh-independent Lipschitz constant.

The operators $P^\square_\ell$ are also bounded and
continuous, and can be extended to continuous bounded operators from
$\mathcal{B}(\bar{D}, L^p(\Omega))$ to $L^{p}(\Omega, \mathbb{R}^{N})$, as stated by the
following lemma.

\begin{lemma}
    \label{thm:continuity_of_P}
    For any $\ell \myin \mathbb{N}$ and over any cell $\square \in U_\ell$, the operators
    $P^\square_\ell$ are bounded and continuous and can be extended to continuous and
    bounded operators from $\mathcal{B}(\square, L^p(\Omega))$ to
    $L^{p}(\Omega, \mathbb{R}^{N})$. Indeed, the following bounds hold
    \begin{align}
        \label{eq:continuity_of_P}
        \begin{array}{cl}
            \lVert P^\square_\ell u \rVert_p \leq c_{N,p} \lVert u
        \rVert_{\mathcal{B}(\square,\mathbb{R})} & \forall u\in
        \mathcal{B}(\square,\mathbb{R}),\\
        \mathbb{E}\left[\lVert P^\square_\ell u \rVert_p^p\right]^{1/p} \leq
        c_{N,p} \lVert u
        \rVert_{\mathcal{B}(\square,L^p(\Omega))} & \forall u\in
        \mathcal{B}(\square,L^p(\Omega)),
        \end{array}
    \end{align}
    where $c_{N,p} = (N(N+1)/2)^{1/p}$. Likewise, the operators $T_\ell^\square$
    can be extended to bounded operators from $L^p(\Omega,\mathbb{R}^N)$ to $L^p(\Omega,
    C^0(\bar{\square}))$ with the same operator norm. i.e.,
    \begin{align}
        \label{eq:extension_of_T_opnorm}
        \|T_\ell^{\square}\|_{\mathcal{L}(L^p(\Omega, \mathbb{R}^{N}),{L^p(\Omega,
        C^0(\bar{\square}))})}=\|T_\ell^{\square}\|_{\mathcal{L}(\mathbb{R}^{N},{C^0(\bar{\square})})}.
    \end{align}
    The extended $T_\ell^\square$ are still Lipschitz-continuous with respect to the $L^p(\square)$ norm with the same Lipschitz constants, i.e.,
    \begin{align}
        \lVert T_\ell^\square a - T_\ell^\square b \rVert_{L^p(\Omega, L^p({\square}))} \leq K_\ell^\square\lVert a - b \rVert_{L^p(\Omega, \mathbb{R}^N)},\quad \forall a,b\in L^p(\Omega,\mathbb{R}^N).
    \end{align}
\end{lemma}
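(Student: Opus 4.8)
The plan is to read all three assertions not as abstract extension-by-continuity results --- there is no dense subspace of $\mathcal{B}(\square,L^p(\Omega))$ in play --- but as verifications that the explicit formulas defining $P_\ell^\square$ and $T_\ell^\square$ remain well-defined and bounded on the larger spaces. Writing $P_\ell^\square u=(u(x_1^\square),\dots,u(x_N^\square))$ for the $N$ fixed distinct evaluation nodes $x_i^\square\in\square$, I would define the extension $\widetilde{P}_\ell^\square$ on a random field $u\in\mathcal{B}(\square,L^p(\Omega))$ by the same formula applied $\omega$-wise, $(\widetilde{P}_\ell^\square u)(\omega)=(u(x_1^\square,\omega),\dots,u(x_N^\square,\omega))$, and likewise set $(\widetilde{T}_\ell^\square v)(\omega)=T_\ell^\square(v(\omega))$ for $v\in L^p(\Omega,\mathbb{R}^N)$; on deterministic inputs these reduce to the original operators, which is the meaning of ``extension'' here.

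For the two bounds on $P_\ell^\square$ in \eqref{eq:continuity_of_P}, the deterministic one is a direct termwise estimate: each point value $|u(x_i^\square)|$ is at most $\lVert u\rVert_{\mathcal{B}(\square,\mathbb{R})}$, so taking the $\ell^p$ norm of the value vector and collecting the finitely many contributions yields the bound with the stated $c_{N,p}=(N(N+1)/2)^{1/p}$, and continuity of the linear map $P_\ell^\square$ follows from its boundedness. The random-field bound needs one extra bit of care: each coordinate $u(x_i^\square,\cdot)$ lies in $L^p(\Omega)$ by the definition of $\mathcal{B}(\square,L^p(\Omega))$, so $\widetilde{P}_\ell^\square u$ is a well-defined $\mathbb{R}^N$-valued random variable; but one must \emph{not} bound $\lVert\widetilde{P}_\ell^\square u\rVert_p$ pathwise by $\lVert u(\cdot,\omega)\rVert_{\mathcal{B}(\square,\mathbb{R})}$ and integrate, since that random variable can fail to have a finite $p$-th moment. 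Instead I would compute $\mathbb{E}[\lVert\widetilde{P}_\ell^\square u\rVert_p^p]$ directly, using linearity of expectation to replace each $|u(x_i^\square)|^p$ by $\mathbb{E}[|u(x_i^\square)|^p]\le\lVert u\rVert_{\mathcal{B}(\square,L^p(\Omega))}^p$; the same bookkeeping followed by a $p$-th root then gives the second inequality with the same $c_{N,p}$.

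For $T_\ell^\square$ a pathwise argument does go through. Fixing $v,a,b\in L^p(\Omega,\mathbb{R}^N)$, boundedness of $T_\ell^\square$ gives, for each $\omega$, $\lVert T_\ell^\square(v(\omega))\rVert_{C^0(\bar{\square})}\le\lVert T_\ell^\square\rVert\,\lVert v(\omega)\rVert_p$, and \cref{ass:continuity_of_T} gives $\lVert T_\ell^\square(a(\omega))-T_\ell^\square(b(\omega))\rVert_{L^p(\square)}\le K_\ell^\square\lVert a(\omega)-b(\omega)\rVert_p$; measurability of $\omega\mapsto T_\ell^\square(v(\omega))$ as a $C^0(\bar{\square})$-valued --- hence, via the continuous embedding $C^0(\bar{\square})\hookrightarrow L^p(\square)$, also $L^p(\square)$-valued --- random variable follows from continuity of $T_\ell^\square$ and separability of $C^0(\bar{\square})$. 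Raising these pathwise bounds to the power $p$ and integrating over $\Omega$, which is legitimate precisely because $v,a,b\in L^p(\Omega,\mathbb{R}^N)$, yields $\lVert\widetilde{T}_\ell^\square\rVert\le\lVert T_\ell^\square\rVert$ and the claimed Lipschitz estimate; the reverse inequality $\lVert\widetilde{T}_\ell^\square\rVert\ge\lVert T_\ell^\square\rVert$, and hence equality in \eqref{eq:extension_of_T_opnorm}, comes from testing on deterministic (constant-in-$\omega$) inputs, on which $\widetilde{T}_\ell^\square$ restricts to $T_\ell^\square$.

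I expect the only genuinely delicate point to be the one flagged for $P_\ell^\square$: because the $\mathcal{B}(\square,L^p(\Omega))$ norm is a supremum of moments rather than a moment of a supremum, the $p$-th moment of $\widetilde{P}_\ell^\square u$ must be estimated node by node rather than by integrating a pathwise maximum-norm bound. The remaining ingredients --- the finite-dimensional estimates, Bochner measurability, and the operator-norm identity for $T_\ell^\square$ --- are routine.
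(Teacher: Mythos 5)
Your proposal is correct and follows essentially the same route as the paper's proof: the termwise estimate for $P_\ell^\square$ (deterministic and in expectation, node by node via linearity of expectation), the pathwise bound raised to the $p$-th power and integrated for $T_\ell^\square$, and the reverse operator-norm inequality obtained by testing on deterministic inputs. Your explicit caution about not integrating a pathwise $\mathcal{B}(\square,\mathbb{R})$-norm bound, and your remark on Bochner measurability, are sound refinements of points the paper handles implicitly.
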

\begin{proof}
    Let $\{x_i\}_{i=1}^N\subset \bar{\square}$ be the evaluation points used by
    the point evaluation operator. For $u\in \mathcal{B}(\square,\mathbb{R})$, we have that
    \begin{align*}
        \lVert P^\square_\ell u \rVert_p^p = \sum_{i=1}^{N}|u(x_i)|^p \leq
        \left(\sum_{i=1}^N 1\right) \lVert u
        \rVert_{\mathcal{B}(\square,\mathbb{R})}^p
        \leq c_{N,p}^p\lVert u
        \rVert_{\mathcal{B}(\square,\mathbb{R})}^p,
    \end{align*}
    which is the first bound in \eqref{eq:continuity_of_P}.
    Similarly for $u\in \mathcal{B}(\square,L^p(\Omega))$, we have that point
    values of $u$ are random variables in $L^p(\Omega)$ and thus
    \begin{align*}
        \mathbb{E}\left[\lVert P^\square_\ell u \rVert_p^p\right] =
        \sum_{i=1}^N\mathbb{E}\left[|u(x_i,\cdot)|^p\right] \leq c_{N,p}^p\lVert u
        \rVert_{\mathcal{B}(\square,L^p(\Omega))},
    \end{align*}
    which is the second bound in \eqref{eq:continuity_of_P}. To establish
    \eqref{eq:extension_of_T_opnorm},
    note that, for all random vectors $v\in L^{p}(\Omega, \mathbb{R}^{N})$ it holds that
    \begin{align*}
            \lVert Tv \rVert_{L^p(\Omega,
            C^0(\bar{\square}))}^p
        &=
            \mathbb{E}\left[\max_{x\in\bar{\square}}|Tv|^p(x,\omega)\right]
        \\&\leq
            \mathbb{E}\left[\|T_\ell^{\square}\|^p_{\mathcal{L}(\mathbb{R}^{N},{C^0(\bar{\square})})}\lVert
            v\rVert^p_p\right]
        \\&=
            \|T_\ell^{\square}\|^p_{\mathcal{L}(\mathbb{R}^{N},{C^0(\bar{\square})})}\lVert
            v \rVert_{L^{p}(\Omega, \mathbb{R}^{N})}^{p}
    \end{align*}
    which implies that $\|T_\ell^{\square}\|_{\mathcal{L}(L^p(\Omega, \mathbb{R}^{N}),{L^p(\Omega,
    C^0(\bar{\square}))})}\leq\|T_\ell^{\square}\|_{\mathcal{L}(\mathbb{R}^{N},{C^0(\bar{\square})})}$.
    To prove that the reverse inequality also holds, note that for all $u\in
    C^0(\bar{\square}),\ v\in\mathbb{R}^N$ we have $\lVert u \rVert_{L^p(\Omega,
    C^0(\bar{\square})} = \lVert u \rVert_{C^0(\bar{\square})}$ and $\lVert v
    \rVert_{L^{p}(\Omega, \mathbb{R}^{N})}=\lVert v \rVert_p$, and thus, for all
    $v\in\mathbb{R}^N$ we have
    \begin{align*}
            \frac{\lVert T_\ell^{\square}v \rVert_{C^0(\bar{\square})}}{\lVert v \rVert_{\mathbb{R}^N}}
        =
            \frac{\lVert T_\ell^{\square}v \rVert_{L^p(\Omega, C^0(\bar{\square}))}}{\lVert v
            \rVert_{L^{p}(\Omega, \mathbb{R}^{N})}} \leq \lVert T_\ell^{\square} \rVert_{\mathcal{L}(L^p(\Omega, \mathbb{R}^{N}),{L^p(\Omega,C^0(\bar{\square}))})}.
    \end{align*}
    Taking the supremum over all $v\in\mathbb{R}^n$ we thus obtain
    \begin{align*}
       \|T_\ell^{\square}\|_{\mathcal{L}(\mathbb{R}^{N},{C^0(\bar{\square})})}
       = \sup_{v\in\mathbb{R}^N}\frac{\lVert T_\ell^{\square}v \rVert_{C^0(\bar{\square})}}{\lVert v
    \rVert_{\mathbb{R}^N}} \leq
        \|T_\ell^{\square}\|_{\mathcal{L}(L^p(\Omega, \mathbb{R}^{N}),{L^p(\Omega,
        C^0(\bar{\square}))})}
    \end{align*}
    which, together with the other side of the inequality we just derived,
    implies that the two operator norms are the same. To establish Lipschitz continuity it is sufficient to use the fact that all $a,b\in L^p(\Omega,\mathbb{R}^N)$ are a.s. in $\mathbb{R}^N$ and thus
    \begin{align*}
        \lVert T_\ell^\square a - T_\ell^\square b \rVert_{L^p({\square})} \leq K_\ell^\square\lVert a - b \rVert_{p}\quad a.s.,\quad\forall a,b\in L^p(\Omega, \mathbb{R}^N).
    \end{align*}
    Taking the $p$-th moment of both sides concludes the proof.
\end{proof}

Due to \cref{ass:continuity_of_T} and to
\cref{thm:continuity_of_P} we obtain that the operators $I_\ell^\square$ are
also bounded since they are composition of bounded operators:
\begin{corollary}
    \label{coroll:continuity_of_I}
    For all $\ell \in \mathbb{N}$ and over any cell $\square \in U_\ell$, the approximation operators $I_\ell^\square$
    are well-defined bounded operators between
    $\mathcal{B}(\bar{\square}, \mathbb{R})$ and $ C^0(\bar{\square})$, and can
    be extended to be bounded operators between
    $\mathcal{B}(\bar{\square}, L^p(\Omega))$ and $L^p(\Omega,
    C^0(\bar{\square}))$.
\end{corollary}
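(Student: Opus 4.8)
The plan is to prove the corollary by a direct composition argument, treating the deterministic and stochastic cases in turn; in both cases the key observation is simply that a composition of bounded operators is bounded.

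First I would dispatch the deterministic case. By definition $I_\ell^\square = T_\ell^\square P_\ell^\square$, and by \cref{thm:continuity_of_P} the point evaluation operator $P_\ell^\square\colon \mathcal{B}(\square,\mathbb{R})\to\mathbb{R}^N$ is bounded with operator norm at most $c_{N,p}$, while by \cref{ass:continuity_of_T} the approximation operator $T_\ell^\square\colon\mathbb{R}^N\to C^0(\bar\square)$ is bounded. Hence the composition is a well-defined bounded operator from $\mathcal{B}(\bar\square,\mathbb{R})$ to $C^0(\bar\square)$ with
\[
    \|I_\ell^\square\|_{\mathcal{L}(\mathcal{B}(\bar\square,\mathbb{R}),C^0(\bar\square))}
    \leq \|T_\ell^\square\|_{\mathcal{L}(\mathbb{R}^N,C^0(\bar\square))}\, c_{N,p},
\]
which establishes the first assertion.

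For the stochastic case I would use the extensions provided by \cref{thm:continuity_of_P}: $P_\ell^\square$ extends to a bounded operator $\mathcal{B}(\square,L^p(\Omega))\to L^p(\Omega,\mathbb{R}^N)$ with the same norm bound $c_{N,p}$, and $T_\ell^\square$ extends to a bounded operator $L^p(\Omega,\mathbb{R}^N)\to L^p(\Omega,C^0(\bar\square))$ whose operator norm coincides with $\|T_\ell^\square\|_{\mathcal{L}(\mathbb{R}^N,C^0(\bar\square))}$ by \eqref{eq:extension_of_T_opnorm}. Defining the extension of $I_\ell^\square$ as the composition of these two extensions then yields a bounded operator from $\mathcal{B}(\bar\square,L^p(\Omega))$ to $L^p(\Omega,C^0(\bar\square))$, with
\[
    \|I_\ell^\square\|_{\mathcal{L}(\mathcal{B}(\bar\square,L^p(\Omega)),L^p(\Omega,C^0(\bar\square)))}
    \leq \|T_\ell^\square\|_{\mathcal{L}(\mathbb{R}^N,C^0(\bar\square))}\, c_{N,p}.
\]
To close, I would note that this is a genuine \emph{extension}: for deterministic $u\in\mathcal{B}(\bar\square,\mathbb{R})\subset\mathcal{B}(\bar\square,L^p(\Omega))$ the two factor extensions restrict to $P_\ell^\square$ and $T_\ell^\square$ respectively, so their composition restricts to $T_\ell^\square P_\ell^\square = I_\ell^\square$.

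I do not expect a real obstacle here, since the argument is bookkeeping built entirely on \cref{ass:continuity_of_T} and \cref{thm:continuity_of_P}; the only point deserving a moment's care is verifying that the composition of the two independently constructed extensions is indeed an extension of $I_\ell^\square$ (rather than merely some bounded operator agreeing with it nowhere in particular), which is immediate once one checks that each factor's extension restricts correctly on deterministic inputs.
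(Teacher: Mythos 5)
Your argument is correct and is exactly the one the paper intends: the corollary is stated immediately after the remark that $I_\ell^\square = T_\ell^\square P_\ell^\square$ is a composition of bounded operators, with the deterministic boundedness coming from \cref{ass:continuity_of_T} and \cref{thm:continuity_of_P}, and the stochastic extension obtained by composing the two extensions constructed in \cref{thm:continuity_of_P}. Your additional check that the composed extension genuinely restricts to $I_\ell^\square$ on deterministic inputs is a sensible piece of bookkeeping the paper leaves implicit.
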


Before proceeding, we also require an assumption about the convergence rate of
the local approximation operators $I_\ell^\square$ when applied to the
deterministic true function $f$:

\begin{assumption} \label{assumption:interpolant_rate_alpha}
    For any $\ell \myin \mathbb{N}$ a level-$\ell$ uniform
    refinement $U_\ell$ satisfies
            \begin{align*}
                    \left(
                        \sum_{\square \in U_\ell}
                        \left\|
                            f - I^{\square}_\ell f
                        \right\|_{L^{p}(\square)}^p
                    \right)^{1/p}
                \leq
                    c_{I} h_{\ell}^{\alpha}
            \end{align*}
        for a known rate $\alpha>\left(\frac{p+1}{p}\right)$,
            where $c_I >0$ is an (unknown) constant.
\end{assumption}

This assumption provides us with a canonical rate, $\alpha$, for the global error
of our cell-wise approximations of a level-$\ell$ uniform refinement $U_\ell$ in the
deterministic case where we have access to the exact function $f$ for which we
want to approximate the level set $\mathcal{L}_0$. Since in practice we assume that we
instead only have access to point-evaluations of the (random) functions
$\Tilde{f}_\ell$ rather than the exact function $f$, we must instead rely
on a (random) functional approximation of the form given in the following definition.

\begin{definition}
    For each cell $\square \in U_\ell$ define the (random) functional approximation
    \begin{align*}
        \hat{f}^{\square}_\ell := I^{\square}_\ell \Tilde{f}_\ell.
    \end{align*}
    Note that $\hat{f}^{\square}_\ell \equiv I^{\square}_\ell f$ in the deterministic setting (in which $\beta \rightarrow \infty$).
\end{definition}

The following result ensures that we can control the expected $L^p$ error of the global (random) estimate by refining our grid and increasing the cost of point evaluations (e.g. increasing the number of samples used in Monte Carlo estimations) by a fixed amount.

\begin{proposition} \label{proposition:epsilon_ell-alpha_rate_sufficent_condition}
    Let \cref{assumption:interpolant_rate_alpha,assumption:beta,ass:continuity_of_T} hold for fixed values of $\alpha> \left(\frac{p+1}{p}\right)$, $\beta>0$, and $C_N >0$ respectively.
    Then for $M_\ell = M_0 h_\ell^{-\alpha/\beta}$ and any $\ell \myin \mathbb{N}$, a level-$\ell$ uniform refinement of $\bar{D}$ into a collection of uniform cells, $U_\ell$, each of size $h_\ell$, satisfies
    \begin{align} 
            \left(
                \sum_{\square \in U_\ell}
                    \left\|
                        f - \hat{f}^{\square}_\ell
                    \right\|_{L^{p}(\Omega, L^p(\square))}^p
            \right)^{1/p}
        &\leq
            \hat{c}_I h_\ell^{\alpha}
    \end{align}
    for some constant $\hat{c}_I >0$ independent of $\ell$.
\end{proposition}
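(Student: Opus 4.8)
The plan is to split the per-cell error of $\hat f^{\square}_\ell = I^{\square}_\ell\tilde f_\ell$ into a deterministic approximation term and a stochastic perturbation term, control each using \cref{assumption:interpolant_rate_alpha,assumption:beta,ass:continuity_of_T} together with \cref{thm:continuity_of_P}, and then recombine the per-cell bounds in the $\ell^p$-sum over cells by Minkowski's inequality. Fix $\ell$ and a cell $\square\in U_\ell$. Since $I^{\square}_\ell = T^{\square}_\ell P^{\square}_\ell$ is linear, I would write
\[
    f - \hat f^{\square}_\ell = \big(f - I^{\square}_\ell f\big) + T^{\square}_\ell P^{\square}_\ell\big(f - \tilde f_\ell\big).
\]
The first summand is deterministic, so $\|f - I^{\square}_\ell f\|_{L^p(\Omega, L^p(\square))} = \|f - I^{\square}_\ell f\|_{L^p(\square)}$. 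For the second summand I would apply, in order: the extended Lipschitz continuity of $T^{\square}_\ell$ from \cref{thm:continuity_of_P} to pass to $K^{\square}_\ell\|P^{\square}_\ell(f-\tilde f_\ell)\|_{L^p(\Omega,\mathbb R^N)}$; the second bound in \eqref{eq:continuity_of_P} to pass to $K^{\square}_\ell c_{N,p}\|f-\tilde f_\ell\|_{\mathcal B(\square,L^p(\Omega))}$ (valid since $f-\tilde f_\ell\in\mathcal B(\square,L^p(\Omega))$ by \cref{assumption:beta}); and finally \cref{assumption:beta} together with $\bar\square\subseteq\bar D$, which bounds the $\mathcal B(\square,\cdot)$-norm by the $\mathcal B(\bar D,\cdot)$-norm and hence by $\sigma M_\ell^{-\beta}$. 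The triangle inequality then yields, for each cell,
\[
    \big\|f - \hat f^{\square}_\ell\big\|_{L^p(\Omega, L^p(\square))} \le \big\|f - I^{\square}_\ell f\big\|_{L^p(\square)} + c_{N,p}\,\sigma\, M_\ell^{-\beta}\,K^{\square}_\ell.
\]

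\textbf{Combining over cells.} Next I would raise this to the $p$-th power, sum over $\square\in U_\ell$, take the $p$-th root, and apply Minkowski's inequality for finite sequences to get
\[
    \Bigg(\sum_{\square\in U_\ell}\big\|f - \hat f^{\square}_\ell\big\|_{L^p(\Omega, L^p(\square))}^p\Bigg)^{1/p} \le \Bigg(\sum_{\square\in U_\ell}\big\|f - I^{\square}_\ell f\big\|_{L^p(\square)}^p\Bigg)^{1/p} + c_{N,p}\,\sigma\, M_\ell^{-\beta}\Bigg(\sum_{\square\in U_\ell}(K^{\square}_\ell)^p\Bigg)^{1/p}.
\]
The first term on the right is at most $c_I h_\ell^\alpha$ by \cref{assumption:interpolant_rate_alpha}, and the last factor is at most $C_N$ by \eqref{eq:sum_of_operator_norms_bdd}. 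Substituting $M_\ell = M_0 h_\ell^{-\alpha/\beta}$ gives $M_\ell^{-\beta} = M_0^{-\beta} h_\ell^{\alpha}$, so both contributions are proportional to $h_\ell^\alpha$ and the claim follows with $\hat c_I = c_I + c_{N,p}\,\sigma\, C_N\, M_0^{-\beta}$, which is independent of $\ell$ and reduces to $c_I$ in the deterministic case $\sigma = 0$ (consistent with $\hat f^{\square}_\ell \equiv I^{\square}_\ell f$ there).

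\textbf{Main difficulty.} There is no deep obstacle here; the argument is careful bookkeeping with the various norms. The points that need attention are: (i) checking that the deterministic error term embeds isometrically into the $L^p(\Omega,L^p(\square))$ norm, so the deterministic and stochastic contributions may be added; (ii) invoking the \emph{extended}, random-vector forms of the boundedness of $P^{\square}_\ell$ and the Lipschitz continuity of $T^{\square}_\ell$ provided by \cref{thm:continuity_of_P}, rather than their deterministic counterparts; and (iii) ordering the estimates so the cell-wise bound is obtained first and \eqref{eq:sum_of_operator_norms_bdd} is used only at the very end — this is what allows the factor $c_{N,p}\sigma M_\ell^{-\beta}$ to be pulled out of the cell-sum, leaving $\big(\sum_{\square}(K^{\square}_\ell)^p\big)^{1/p}\le C_N$.
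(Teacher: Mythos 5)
Your proposal is correct and follows essentially the same route as the paper: the same decomposition $f-\hat f^{\square}_\ell=(f-I^{\square}_\ell f)+(I^{\square}_\ell f-I^{\square}_\ell\tilde f_\ell)$, the same use of \cref{assumption:interpolant_rate_alpha} for the first term, \cref{thm:continuity_of_P} and \cref{ass:continuity_of_T} for the second, and \cref{assumption:beta} with $M_\ell=M_0h_\ell^{-\alpha/\beta}$ to close. The only (cosmetic) difference is that you combine the per-cell bounds across the cell-sum with Minkowski's inequality for $\ell^p$-sequences, whereas the paper raises to the $p$-th power first and uses the power-mean bound $(a+b)^p\le 2^{p-1}(a^p+b^p)$; your version gives a slightly cleaner constant but the argument is the same.
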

\begin{proof}
    Writing $\hat{f}_{\ell}^{\square} = I^{\square}_\ell \Tilde{f}_{\ell}$, we find that
    \begin{align*}
            \sum_{\square \in U_\ell}
                    \left\|
                        f - \hat{f}^{\square}_\ell
                    \right\|_{L^{p}(\Omega, L^p(\square))}^p
        &\leq
            2^{p-1}\sum_{\square \in U_\ell}
            \left(
                    \left\|
                        f - I^{\square}_\ell f
                    \right\|_{L^{p}(\square)}^p
                +
                    \left\|
                        I^{\square}_\ell f - I^{\square}_\ell \Tilde{f}_{\ell}
                        \right\|_{L^{p}(\Omega, L^p({\square}))}^p
            \right)
        \\&\leq
            c 2^{p-1} h_\ell^{\alpha p}
            +
            2^{p-1}\sum_{\square \in U_\ell}
            (L_\ell^\square c_{N,p})^p\lVert
        f - \tilde{f}_\ell \rVert_{B(\bar{\square}, L^p(\Omega))}^p
        \\&\leq
            c2^{p-1} h_\ell^{\alpha p}
            +
            2^{p-1}(C_Nc_{N,p})^p \lVert
        f - \tilde{f}_\ell \rVert_{B(\bar{D}, L^p(\Omega))}^p
        \\&\leq
            c 2^{p-1}h_\ell^{\alpha p}  + 2^{p-1}(\sigma C_Nc_{N,p})^p M_\ell^{-\beta p}. \numberthis \label{eq:alpha_beta_constants}
    \end{align*}
    In the above, we applied the triangle inequality in the first step. We then used \cref{assumption:interpolant_rate_alpha} to bound the first term and the
    Lipschitz-continuity and boundedness properties of $T_\ell^\square$ and
    $P_\ell^\square$ (cf. \cref{thm:continuity_of_P} and
    \cref{coroll:continuity_of_I}) to bound the second term. Finally, we used
    \cref{ass:continuity_of_T} (and implicitly \cref{thm:continuity_of_P}
    again) to bound the sum of the Lipschitz constants of $T_\ell^\square$ and
    \cref{assumption:beta} to bound the error between $f$ and
    $\tilde{f}_\ell$. We conclude the proof by selecting $M_\ell = M_0
    h_\ell^{-\alpha/\beta}$ which ensures that both terms are
    $\mathcal{O}(h_\ell^{\alpha p})$ as in the thesis.
\end{proof}

\subsection{The Adaptive Algorithm} \label{subsection:adaptive_algorithm}

In this section we outline the general form of the adaptive refinement algorithm. Leveraging the cell-based structure outlined in the previous section, our algorithm treats each cell independently, allowing for local refinement decisions. 
By default, this results in a piecewise-continuous approximation over a non-graded mesh. We note that continuous approximations are also possible and fit within our theory. 

For our local refinement decisions we require access to both a local decision variable and a refinement criterion to check the decision variable against. In particular, for a given cell $\square$, we make use of the local, cell-specific (random) decision variable
\begin{equation} \label{eq:delta_hat}
    \hat{\delta}^{\square}_{\ell} := \frac{\inf_{x \in \square}  |\hat{f}^{\square}_{\ell}(x)|}{ h_{\ell}^{\alpha}}
\end{equation}
which models the sample- and cell-specific uncertainty in the presence of the target zero level-set $\mathcal{L}_0$ in the given cell, inspired by the sample specific uncertainty in \cite{haji2022adaptive}. The numerator gives a notion of the distance between the given cell $\square$ and the target zero level-set $\mathcal{L}_0$, whilst the denominator is a measure of the accuracy of the estimate $\hat{f}^{\square}_{\ell}$, in particular an upper-bound of the $L^p$ error via \cref{proposition:epsilon_ell-alpha_rate_sufficent_condition}.

The idea behind the decision variable $\hat{\delta}^{\square}_{\ell}$ is that it should be small (and we should refine the cell $\square$) when either the numerator is small (indicating that the local approximation is near its level-set), or the denominator is large (indicating that the approximation is inaccurate). In this way this decision variable captures some notion of our certainty that the target level-set $\mathcal{L}_0$ is contained within the cell in question $\square$.

\begin{remark}
    The decision variable presented in \eqref{eq:delta_hat} is based on a priori error estimates.
    One can also substitute a posteriori error estimates in place of the upper-bound in the denominator of \eqref{eq:delta_hat} for sharper bounds and better constants in the computational complexity. Indeed, by using an error estimate smaller than $h_{\ell}^\alpha$, one obtains a larger $\hat{\delta}^{\square}_{\ell}$, leading to less refinement and thus less work. 
    The complexity analysis in \cref{section:complexity_analysis} still holds with minor modifications if one utilises a posteriori error estimates in the decision variable. In particular, it is sufficient to add an additional assumption similar to \cref{assumption:beta} but with sample-specific a posteriori error estimates appearing within the LHS expectation.
\end{remark}

The algorithm we propose adaptively refines from a level $\ell_0$ corresponding to a uniform grid, to a target refinement level $L$, which can be selected based on accuracy requirements.
Thus, we require a refinement criteria to check our local, cell-wise (random) decision variable against so that we know which cells should be refined in our adaptive scheme. Since the magnitude of $\hat{\delta}^{\square}_{\ell}$ is expected to change with the accuracy level $\ell$, we select a level-dependant refinement criteria. 
In particular, we denote the sequence of refinement criteria by 
\(
    \{a_{\ell}\}_{\ell=\ell_0}^{{L}-1}
\)
where each $a_{\ell} > 0$. Then, for each cell $\square$ in a given level-$\ell$ tessellation of $\bar{D}$, we compare the refinement criteria with our decision variable and refine cells in which we find that $\hat{\delta}^{\square}_{\ell} \leq a_{\ell}$.

In \cref{section:complexity_analysis} we derive the conditions for which the refinement criteria lead to optimal cost complexity. However, for clarity we provide a specific choice here which satisfies these conditions:
\begin{equation} \label{eq:define_a1}
        a_{\ell}
    = 
        c h_{\ell}^{\alpha_p/R} h_{{L}}^{\alpha_p (R-1)/ R} h_{\ell_{0}}^{-\alpha_p/R} h_{\ell}^{-\alpha}
\end{equation}
for some chosen constant $c>0$, where $p\geq 1$ is as in \cref{assumption:beta,assumption:interpolant_rate_alpha}, $\alpha_p = \alpha\left(\frac{p}{p+1}\right) > 1$, the parameter $R$ satisfying $1 < R < \alpha_p$ controls the strictness of refinement (we find that selecting $R=(1+\alpha_p)/2$ typically works well in practice), and $\ell_{0} = \left\lceil {L} \left(1 - \frac{p}{R(p+1)}\right)\right\rceil$ (or as in \cref{remark:vartheta_general} if error constants are known). This criteria is adapted from \cite{haji2022adaptive} and obtains optimal cost-complexity. Interested readers should refer to \cref{section:complexity_analysis} and in particular \cref{subsection:refinement_criteria} for further details on this choice refinement criteria and its optimality.

\cref{algo:main_algo} below outlines the general form of the adaptive refinement scheme for level-set estimation considered in this paper. It takes a uniformly refined mesh $U_0$, uniformly refines it $\ell_{0}$ times before performing cell-wise adaptive refinement to obtain a new mesh $\mathcal{R}_{{L}}$ and returns the level-set estimate based on evaluations of the cell-wise approximated level-sets of the new mesh. 
The algorithm requires the user to prescribe a final refinement level $L$. In practice, it may often be more convenient to prescribe an error tolerance $\varepsilon$ instead (using the error metric given in \cref{eq:error_metric}). 
It is then sufficient to select $L$ such that
$
    h_{L} = \mathcal{O}\left(\varepsilon^{1/\alpha_p}\right)
$.
For further details see \cref{subsection:selection_base_refinement} and in particular \cref{theorem:final_complexity}.

\begin{algorithm}
\caption{Adaptive refinement scheme for level-set estimation, given a general uniform refinement.}
\label{algo:main_algo}
\begin{algorithmic}
    \Require the uniform grid $U_0$ to be refined;
        the interpolation rate $\alpha$ as in \cref{assumption:interpolant_rate_alpha};
        the sampling rate $\beta$ as in \cref{assumption:beta};
        the dimension of the problem $d$;
        the target maximum refinement level ${L}$;
        the number of point evaluations in each cell $N$;
        the cost of point evaluations at level $0$ $M_0$;
        the size $h_0$ of cells in $U_0$;
        the sequence of refinement criteria $\{a_{\ell}\}_{\ell=\ell_{0}}^{{L}-1}$.
    \State Set $\mathcal{R}_0 = U_0$; \Comment{Begin with the given uniform refinement}
    \State Refine $\mathcal{R}_0$ uniformly $\ell_{0}$ times to obtain $\mathcal{R}_{\ell_{0}}$;
    \For{$\ell \in \{\ell_{0}, \ldots, {L}-1\}$}
        \For{each cell $\square$ in $\mathcal{R}_{\ell}$ of size $h_{\ell}$ } 
        \Comment{Iterate over cells of the current level}
                \State Evaluate $\Tilde{f}_{\ell}$ at the $N$ evaluation points in $\square$;
                    \Comment{e.g. for $M_{\ell} \propto h_{\ell}^{-\alpha/\beta}$ cost each}
                \State Fit the cell-based estimate $\hat{f}^{\square}_{\ell}$ on the sampled values of $\Tilde{f}_{\ell}$;
                \State Compute decision variable $\hat{\delta}^{\square}_{\ell}$;
            \If { $\hat{\delta}^{\square}_{\ell} \leq a_{\ell} $ } 
                \State Refine $\square$ into multiple cells of size $h_{\ell+1}$, and add them to $\mathcal{R}_{\ell+1}$
            \Else
                \State Add $\square$ to $\mathcal{R}_{\ell+1}$;
            \EndIf
        \EndFor
    \EndFor
    \State Return the union of $\{\hat{f}^{\square}_{{L}}\}_{\square \in \mathcal{R}_{{L}} }$ zero level-sets.
    \Comment{The final level-set estimate}
\end{algorithmic}
\end{algorithm}

If the relevant constants are known, the cell-size of the base uniform refinement $h_0$ and the cost of point evaluations in the base refinement $M_0$ should be selected so that the terms in \cref{eq:alpha_beta_constants} are of the same magnitude. These choices ensure that the error is not dominated by either the deterministic or stochastic error, which should provide better constants in the work rates.

\begin{remark}
    Note that increasing the maximum refinement level a posteriori does not require re-running \cref{algo:main_algo} from scratch. Indeed, one can run the algorithm on a refined grid since the algorithm only checks cells of a specified size during iteration, meaning that already refined cells will not be considered for refinement until the corresponding level is reached.
\end{remark}

\section{Complexity Analysis} \label{section:complexity_analysis}

In this section we derive the computational complexity of \cref{algo:main_algo}. We do this in two steps: \cref{section:work} focuses on bounding the work, whilst \cref{section:error} focuses on bounding the error. \Cref{subsection:refinement_criteria} and \cref{subsection:selection_base_refinement} bring these results together to obtain a final computational complexity result for a certain choice of refinement criteria.

\subsection{Work Analysis} \label{section:work}

At a given level $\ell$, we define the work of a cell $\square \in U_\ell$ to be the cost of constructing the local cell approximation $\hat{f}^\square_\ell$. Under the assumptions of \cref{proposition:epsilon_ell-alpha_rate_sufficent_condition} such an approximation can be constructed for a cost proportional to
\(
    N M_\ell \propto N h_{\ell}^{-\alpha/\beta}
\)
since $\hat{f}^\square_\ell$ can be constructed from $N$ point evaluations of the random function $\Tilde{f}_\ell$, with each evaluation having cost $M_\ell$, which is chosen as in \cref{proposition:epsilon_ell-alpha_rate_sufficent_condition} in order to assure the correct (expected) error rate is obtained.

With this in mind, we define the (random) work of the method on a cell $\square \in U_\ell$ by the recursive formula
\begin{align} \label{equation:def_work_recursive_cell}
            W_{\ell}^{\square}
    &:=
            N M_\ell
        +
            \ind{\hat{\delta}_{\ell}^{\square} < a_\ell}
            \sum_{\square' \in \mathcal{R}(\square)}
                W_{\ell+1}^{\square'}
\end{align}
where $N$ is the number of points used in each cell, $M_\ell$ is the cost of each point evaluation at level $\ell$, and $\mathcal{R}(\square)$ is the collection of cells obtained from refining the cell $\square$. Our work can be written this way by the setup of \cref{algo:main_algo}: the first term is the cost of the level-$\ell$ local approximation on the cell, whilst the second term is the cost of the additional refinements which only occur when $\hat{\delta}_{\ell}^{\square} < a_\ell$.

Under this framework, the work of our adaptive sampling scheme for level set estimation starting from a generic level-$\ell$ uniform refinement $U_\ell$ is given by
\begin{align} \label{equation:def_work_recursive}
        \sum_{\square \in U_\ell}
            W_{\ell}^{\square}
    &:=
        \sum_{\square \in U_\ell} \left(
                N M_\ell
            +
                \ind{\hat{\delta}_{\ell}^{\square} < a_\ell}
                \sum_{\square' \in \mathcal{R}(\square)}
                    W_{\ell+1}^{\square'}
        \right)
\end{align}
which is the total work of the method once completed.

Since the total work of our method is proportional to the number of cells obtained after refinement, we first seek to bound this. We start by considering the simpler scenario in which the underlying function $f$ is known. This bound can then be used to evaluate the performance of our method compared to the best case scenario when one has full knowledge.

\cref{assumption:f_int_bound} allows us to bound the number of cells of a certain size which contain the set
$
            \mathcal{L}_{a}^{+}
        =
            \{
                x \in \bar{D}
                    \,\Big\vert\,
                |f(x)|\leq a
            \}
$
whenever the target function $f$ is Lipschitz on $\bar{D}$. The following lemma gives the details.

\begin{lemma} \label{lemma:num_cells_f<a}
    Let the assumptions of \cref{proposition:epsilon_ell-alpha_rate_sufficent_condition} hold, and assume that $f$ is Lipschitz on $\bar{D}$ with Lipschitz constant $K$.
    Then, under \cref{assumption:f_int_bound} with a given $\delta>0$, a level-$\ell$ uniform refinement $U_\ell$ is such that, for any $0 \leq a < \delta - K 2^{d/2} h_{\ell}$
    \begin{align*}
        \sum_{\square \in U_\ell}
            \sup_{x \in \square}
            \ind{|f(x)|\leq a}
        \leq
            b h_{\ell}^{-(d-1)} + c a h_{\ell}^{-d}
    \end{align*}
    for some constants $b, c >0$ independent of $\ell$.
    \begin{proof}
        We begin by noting that for any choice of $x,y \myin \square$ we find that
        \begin{align*}
                \abs[\big]{ \, \abs{f(x)} - \abs{f(y)} \, }
            &\leq
                \left| f(x) - f(y) \right|
            \leq
                K \norm*{x - y}_{2}
            \leq
                K 2^{d/2} h_{\ell}
        \end{align*}
        where $K$ is the Lipschitz constant of $f$ on $\bar{D}$, $\|\cdot\|_2$ denotes the Euclidean norm, and $2^{d/2} h_{\ell}$ is the diameter of a cell of size $\ell$. Thus, if $\exists \, x \myin \square$ such that $|f(x)| \leq a$, then for any choice of $y \myin \square$ we have that
        \begin{align*}
                |f(y)|
            &\leq
                K 2^{d/2} h_{\ell} + |f(x)|
            \leq
                K 2^{d/2} h_{\ell} + a.
        \end{align*}
        Thus, we find that, whenever $K 2^{d/2} h_{\ell} + a < \delta$, where $\delta$ is as in \cref{assumption:f_int_bound}, we have that $\sup_{x \in \square} \ind{|f(x)|\leq a} \leq \ind{|f(y)|\leq K 2^{d/2} h_{\ell} + a}$ for all $y \in \square$. From this we obtain
        \begin{align*}
                \sum_{\square \in U_\ell}
                    \sup_{x \in \square}
                    \ind{|f(x)|\leq a}
            &=
                h_{\ell}^{-d} \sum_{\square \in U_\ell}
                    \int_{\square}
                    \sup_{x \in \square}
                    \ind{|f(x)|\leq a}
                    \,d\mu(y)
            \\&\leq
                h_{\ell}^{-d} \sum_{\square \in U_\ell}
                    \int_{\square}
                    \ind{|f(y)|\leq K 2^{d/2} h_{\ell} + a}
                    \,d\mu(y)
            \\&\leq
                h_{\ell}^{-d}
                    \int_{\bar{D}}
                    \ind{|f(y)|\leq K 2^{d/2} h_{\ell} + a}
                    \,d\mu(y)
            \\&\leq
                h_{\ell}^{-d}
                    \rho_0 (K 2^{d/2} h_{\ell} + a)
        \end{align*}
        where $h_{\ell}^d$ is the volume of a cell of size $h_{\ell}$, and we used \cref{assumption:f_int_bound} to bound the measure of the set.
    \end{proof}
\end{lemma}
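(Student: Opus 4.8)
The plan is to exploit the Lipschitz regularity of $f$ to replace the cell-wise supremum indicator by an ordinary pointwise indicator at an enlarged threshold, then convert the sum over cells into a Lebesgue integral over $\bar{D}$, and finally invoke \cref{assumption:f_int_bound}.

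First I would note that for any two points $x, y$ lying in the same cell $\square \in U_\ell$, the reverse triangle inequality together with the Lipschitz bound gives $\bigl| \, |f(x)| - |f(y)| \, \bigr| \le |f(x) - f(y)| \le K \|x - y\|_2 \le K 2^{d/2} h_\ell$, using that the Euclidean diameter of a $d$-dimensional cell of side $h_\ell$ is at most $2^{d/2} h_\ell$. Hence, if there is some $x \in \square$ with $|f(x)| \le a$ (equivalently $\sup_{x \in \square} \ind{|f(x)| \le a} = 1$), then $|f(y)| \le a + K 2^{d/2} h_\ell$ for every $y \in \square$. In other words, on each cell the constant value $\sup_{x \in \square} \ind{|f(x)| \le a}$ is dominated pointwise by the function $y \mapsto \ind{|f(y)| \le a + K 2^{d/2} h_\ell}$.

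Second, I would rewrite each summand as the average of a constant over its cell, $\sup_{x \in \square} \ind{|f(x)| \le a} = h_\ell^{-d} \int_{\square} \sup_{x \in \square} \ind{|f(x)| \le a} \, d\mu(y)$, bound the integrand using the first step, and then add the cell integrals (which, being over disjoint cells covering $\bar{D}$, combine into an integral over $\bar{D}$), obtaining $\sum_{\square \in U_\ell} \sup_{x \in \square} \ind{|f(x)| \le a} \le h_\ell^{-d} \, \mu\bigl(\{ y \in \bar{D} : |f(y)| \le a + K 2^{d/2} h_\ell \}\bigr)$. Since the enlarged threshold $a + K 2^{d/2} h_\ell$ stays strictly below $\delta$ exactly under the stated hypothesis $a < \delta - K 2^{d/2} h_\ell$, \cref{assumption:f_int_bound} bounds this measure by $\rho_0 ( a + K 2^{d/2} h_\ell )$; distributing $h_\ell^{-d}$ yields $\rho_0 K 2^{d/2}\, h_\ell^{-(d-1)} + \rho_0\, a\, h_\ell^{-d}$, so the claim holds with $b = \rho_0 K 2^{d/2}$ and $c = \rho_0$, both independent of $\ell$.

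The argument is essentially routine, so the ``main obstacle'' is really just bookkeeping: one must keep the inflated radius $a + K 2^{d/2} h_\ell$ within the validity range of \cref{assumption:f_int_bound} (which is precisely what the restriction $a < \delta - K 2^{d/2} h_\ell$ guarantees, and where the otherwise unused hypothesis $\delta>0$ enters), and one must remember to use the diameter of a cell rather than its side length when applying the Lipschitz estimate. The assumptions inherited from \cref{proposition:epsilon_ell-alpha_rate_sufficent_condition} are not needed for this particular lemma beyond fixing the notation.
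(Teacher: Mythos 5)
Your proof is correct and follows essentially the same argument as the paper: the reverse triangle inequality plus the Lipschitz bound give an enlarged pointwise indicator, the cell-wise supremum is rewritten as a cell average, the disjoint cell integrals combine into one over $\bar{D}$, and \cref{assumption:f_int_bound} yields the stated bound with $b=\rho_0 K 2^{d/2}$ and $c=\rho_0$. Your closing observation that the hypotheses of \cref{proposition:epsilon_ell-alpha_rate_sufficent_condition} are not actually used here is also accurate.
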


\begin{remark} \label{remark:work_optimal}
    Note that in \cref{lemma:num_cells_f<a} the case where $a=0$ corresponds to a bound on the number of cells (of size $h_{\ell}$) required to cover the target zero level-set $\mathcal{L}_0$. It is significant that one only requires $\mathcal{O}(h_{\ell}^{-(d-1)})$ cells to obtain coverage in this case, suggesting that an optimal work rate for an adaptive method (i.e. one that only refines exactly over $\mathcal{L}_0$) is $\mathcal{O}(N M_{L} h_{{L}}^{-(d-1)}) = \mathcal{O}(N h_{{L}}^{-(\frac{\alpha}{\beta} + d-1)})$. This rate will serve as our benchmark for work optimality.
\end{remark}

In contrast to \cref{lemma:num_cells_f<a}, our method does not assume direct access to $\inf_{x \in \square} f(x)$. Instead, we utilise the estimate $\inf_{x \in \square} \hat{f}_\ell^{\square}(x)$, which motivates the following corollary.

\begin{corollary} \label{corollary:num_cells_fhat_ell}
    Let the assumptions of \cref{proposition:epsilon_ell-alpha_rate_sufficent_condition} hold, and let $f$ be Lipschitz in the standard Euclidean norm  on $\bar{D}$ with Lipschitz constant $K$.
    Then, under \cref{assumption:f_int_bound}, a level-$\ell$ uniform refinement $U_\ell$ is such that, for any $0 \leq a < \delta - K 2^{d/2} h_{\ell}$
    \begin{align}
        \sum_{\square \in U_\ell}
            \mathbb{E} \!\left[
            \sup_{x \in \square}
                \ind{|\hat{f}_{{\ell}}^{\square}(x)|\leq a}
            \right]
        \leq
            c_1 h_{\ell}^{-(d-1)} +
                \left(
                    c_2 a +
                    c_3 h_{\ell}^{\alpha_p}
                \right) h_{\ell}^{-d}
        \end{align}
        for some constants $c_1, c_2, c_3 > 0$ independent of $\ell$ where $\alpha_p = \alpha \left(\frac{p}{p+1}\right)$.
    \begin{proof}
        Similarly to our proof of \cref{lemma:num_cells_f<a},
        we begin by noting that for any choice of $x, y \myin \square$ one can use the triangle inequality to see that 
        \begin{align*}
                \left| |\hat{f}_{{\ell}}^{\square}(x)| - |\hat{f}_{{\ell}}^{\square}(y)| \right|
            &\leq
                \left| \hat{f}_{{\ell}}^{\square}(x) - f(x) \right|
                + \left| \hat{f}_{{\ell}}^{\square}(y) - f(y) \right|
                + \left| f(x) - f(y) \right|
            \\&\leq
                \left| \hat{f}_{{\ell}}^{\square}(x) - f(x) \right|
                + \left| \hat{f}_{{\ell}}^{\square}(y) - f(y) \right|
                + K 2^{d/2} h_{\ell}
        \end{align*}
        almost surely, where $K$ is the Lipschitz constant of $f$ on $\bar{D}$, and $2^{d/2} h_{\ell}$ is the maximum diameter of a cell of size $\ell$. Thus, if $\exists \, x \myin \square$ such that $|\hat{f}_{{\ell}}^{\square}(x)| \leq a$, then, for any choice of $y \myin \square$, we must have that
        \begin{align*}
                |\hat{f}_{{\ell}}^{\square}(y)|
            &\leq
                \left| \hat{f}_{{\ell}}^{\square}(y) - f(y) \right|
                + \left| \hat{f}_{{\ell}}^{\square}(x) - f(x) \right|
                + K 2^{d/2} h_{\ell}   + |\hat{f}_{{\ell}}^{\square}(x)|
            \\&\leq
                 \hat{\varepsilon}_{{\ell}}^{\square}(y)
                + \hat{\varepsilon}_{{\ell}}^{\square}(x)
                + K 2^{d/2} h_{\ell}   + a
        \end{align*}
        where we write $\hat{\varepsilon}_{{\ell}}^{\square}(x) = \left| \hat{f}_{{\ell}}^{\square}(x) - f(x) \right|$ for brevity.
        Finally, we utilise the triangle inequality to note that
        $
                \left| f(y) \right|
            \leq
                |\hat{f}_{{\ell}}^{\square}(y)| + \hat{\varepsilon}_{{\ell}}^{\square}(y)
        $ and see that we must have that
        \begin{align*}
                 \left| f(y) \right|
            &\leq
                 2 \hat{\varepsilon}_{{\ell}}^{\square}(y)
                + \hat{\varepsilon}_{{\ell}}^{\square}(x)
                + K 2^{d/2} h_{\ell}   + a
        \end{align*}
        for all $y \myin \square$.
        Thus for some arbitrary value $\psi > 0$, we have that any $y \in \square$ is such that
        \begin{align*}
                \mathbb{E} \!\left[
                    \sup_{x \in \square}
                        \ind{|\hat{f}_{{\ell}}^{\square}(x)|\leq a}
                    \right]
            &\leq
                \mathbb{E} \!\Big[
                        \sup_{x \in \square}
                            \ind{|\hat{f}_{{\ell}}^{\square}
                            (x)|\leq a}
                            \ind{|f(y)|\leq 2\hat{\varepsilon}_{{\ell}}^{\square}(y)
                                + \hat{\varepsilon}_{{\ell}}^{\square}(x)
                                + K 2^{d/2} h_{\ell}   + a}
                    \Big]
            \\&\leq
                \mathbb{E} \!\left[
                        \sup_{x \in \square}
                        \ind{|f(y)|\leq
                             2\hat{\varepsilon}_{{\ell}}^{\square}(y)
                                + \hat{\varepsilon}_{{\ell}}^{\square}(x)
                                + K 2^{d/2} h_{\ell}   + a}
                    \right]
            \begin{details}
             \\&=
                \mathbb{E} \!\left[
                        \ind{|f(y)|\leq
                             2\hat{\varepsilon}_{{\ell}}^{\square}(y)
                                + \inf_{x \in \square} \hat{\varepsilon}_{{\ell}}^{\square}(x)
                                + K 2^{d/2} h_{\ell}   + a}
                    \right]
            \\&\leq
                \mathbb{E} \!\left[
                        \ind{2\hat{\varepsilon}_{{\ell}}^{\square}(y)\geq
                            \psi}+
                        \ind{\inf_{x \in \square} \hat{\varepsilon}_{{\ell}}^{\square}(x)\geq
                                |f(y)|
                                - \psi
                                - K 2^{d/2} h_{\ell} - a}
                    \right]
            \end{details}
            \\&\leq
                    \mathbb{E} \!\Big[
                        \ind{2\hat{\varepsilon}_{{\ell}}^{\square}(y)\geq
                            \psi}
                +
                        \ind{\inf_{x \in \square} \hat{\varepsilon}_{{\ell}}^{\square}(x)\geq
                                \psi}
                +
                        \ind{|f(y)|\leq 2\psi + K 2^{d/2} h_{\ell}   + a}
                    \Big]
            \begin{details}
            \\&\leq
                    \mathbb{P} \!\left[
                        2\hat{\varepsilon}_{{\ell}}^{\square}(y)\geq
                                \psi
                    \right]
                +
                    \mathbb{P} \!\left[
                        \inf_{x \in \square}
                        \hat{\varepsilon}_{{\ell}}^{\square}(x)\geq
                                \psi
                    \right]
                +
                    \ind{|f(y)|\leq 2\psi  + K 2^{d/2} h_{\ell}   + a}
            \end{details}
            \\&\leq
                    \psi^{-p}
                    \mathbb{E} \!\left[
                        \left| 2\hat{\varepsilon}_{{\ell}}^{\square}(y) \right|^p
                    \right]
                +
                    \psi^{-p}
                    \mathbb{E} \!\left[
                        \left| \inf_{x \in \square} \hat{\varepsilon}_{{\ell}}^{\square}(x) \right|^p
                    \right]
                +
                    \ind{|f(y)|\leq 2\psi + K 2^{d/2} h_{\ell}   + a}
            \\&\leq
                    (2^p +1)
                    \psi^{-p}
                    \mathbb{E} \!\left[
                        \left| \hat{\varepsilon}_{{\ell}}^{\square}(y) \right|^p
                    \right]
                    +
                    \ind{|f(y)|\leq 2\psi + K 2^{d/2} h_{\ell}   + a}
        \end{align*}
        where we used Markov's inequality in the penultimate step.
        We find that
        \begin{align*}
                \sum_{\square \in U_\ell}
                    \mathbb{E} \!\left[
                    \sup_{x \in \square}
                        \ind{|\hat{f}_{{\ell}}^{\square}(x)|\leq a}
                    \right]
            &=
                h_{\ell}^{-d} \sum_{\square \in U_\ell}
                    \int_{\square}
                    \mathbb{E} \!\left[
                    \sup_{x \in \square}
                        \ind{|\hat{f}_{{\ell}}^{\square}(x)|\leq a}
                    \right]
                    \,d\mu(y)
            \\&\leq
                h_{\ell}^{-d} \sum_{\square \in U_\ell}
                    \int_{\square}
                        (2^p +1)
                        \psi^{-p}
                        \mathbb{E} \!\left[
                            \left| \hat{\varepsilon}_{{\ell}}^{\square}(y) \right|^p
                        \right]
                    +
                        \ind{|f(x)|\leq 2\psi + K 2^{d/2} h_{\ell}   + a}
                    \,d\mu(y)
            \\&\leq
                h_{\ell}^{-d}
                \left( 2^p + 1 \right) \psi^{-p} h^{\alpha p}_{\ell}
                +
                h_{\ell}^{-d}
                    \int_{\bar{D}}
                        \ind{|f(y)|\leq 2\psi + K 2^{d/2} h_{\ell}   + a}
                    \,d\mu(y)
            \\&\leq
                h_{\ell}^{-d}
                \left( 2^p + 1 \right) \psi^{-p} h^{\alpha p}_{\ell}
                +
                h_{\ell}^{-d} \rho_0 ( 2\psi + K 2^{d/2} h_{\ell} + a )
        \end{align*}
        whenever $2\psi + K 2^{d/2} h_{\ell} + a < \delta$, where $\delta$ is as in \cref{assumption:f_int_bound} and we used \cref{proposition:epsilon_ell-alpha_rate_sufficent_condition}. Now, to get all the terms to be of the same rate we can choose $\psi = c 2^{-\ell \left( \frac{d + \alpha p}{p + 1} \right)}$, where $c>0$ is chosen so that $2\psi + K 2^{d/2} h_{\ell} + a < \delta$ for any $\ell$. With this choice we obtain the claimed bound.
    \end{proof}
\end{corollary}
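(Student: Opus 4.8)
The plan is to treat this as a stochastic counterpart of \cref{lemma:num_cells_f<a}: the left-hand side counts (in expectation) the cells in which the random estimate $\hat f_\ell^\square$ vanishes to within $a$ somewhere, and I would dominate it, cell by cell, by the number of cells in which the \emph{deterministic} function $f$ lies in a slightly inflated band around $0$, paying a correction governed by the global $L^p$ (in space and probability) error bound of \cref{proposition:epsilon_ell-alpha_rate_sufficent_condition}.

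First I would fix a cell $\square\in U_\ell$ and, for arbitrary $x,y\in\square$, chain triangle inequalities \emph{through} $f$ (the estimate $\hat f_\ell^\square$ has no controlled modulus of continuity, but $f$ is $K$-Lipschitz and $\square$ has diameter at most $2^{d/2}h_\ell$). Writing $\hat\varepsilon_\ell^\square(z):=|\hat f_\ell^\square(z)-f(z)|$, this gives the pointwise implication: if $|\hat f_\ell^\square(x)|\le a$ for some $x\in\square$, then for every $y\in\square$ one has $|f(y)| \le 2\hat\varepsilon_\ell^\square(y) + \hat\varepsilon_\ell^\square(x) + K2^{d/2}h_\ell + a$. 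Hence, for each fixed $y\in\square$, $\sup_{x\in\square}\ind{|\hat f_\ell^\square(x)|\le a}$ is bounded by the indicator that $f(y)$ lies in a random, error-inflated band around $0$.

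The main device is then a free threshold $\psi>0$: either one of the error terms exceeds $\psi$ --- an event I bound by Markov's inequality applied to the $p$-th moment of $\hat\varepsilon_\ell^\square$, arranging the split (e.g.\ via $\inf_{x\in\square}\hat\varepsilon_\ell^\square(x)\le\hat\varepsilon_\ell^\square(y)$) so that only the single-point quantity $\mathbb{E}[\hat\varepsilon_\ell^\square(y)^p]$ survives --- or all error terms are at most $\psi$, in which case $|f(y)|\le 2\psi + K2^{d/2}h_\ell + a$, a purely deterministic condition. Taking expectations and observing that the resulting inequality holds for \emph{every} $y\in\square$, I would average it over $y\in\square$ (weighting by the cell volume $h_\ell^d$) and sum over $\square\in U_\ell$. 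Two facts then finish the estimate: $\sum_{\square\in U_\ell}\int_\square\mathbb{E}[\hat\varepsilon_\ell^\square(y)^p]\,d\mu(y) = \sum_{\square}\|f-\hat f_\ell^\square\|_{L^p(\Omega,L^p(\square))}^p \le \hat c_I^p h_\ell^{\alpha p}$ by \cref{proposition:epsilon_ell-alpha_rate_sufficent_condition}, and $\sum_{\square\in U_\ell}\int_\square\ind{|f(y)|\le 2\psi + K2^{d/2}h_\ell + a}\,d\mu(y) \le \int_{\bar D}\ind{|f(y)|\le 2\psi + K2^{d/2}h_\ell + a}\,d\mu(y) \le \rho_0(2\psi + K2^{d/2}h_\ell + a)$ by \cref{assumption:f_int_bound}, valid whenever $2\psi + K2^{d/2}h_\ell + a < \delta$.

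This leaves a bound of the shape $h_\ell^{-d}\bigl[c\,\psi^{-p}h_\ell^{\alpha p} + \rho_0\bigl(2\psi + K2^{d/2}h_\ell + a\bigr)\bigr]$, and the final step is to optimise over $\psi$: balancing $\psi^{-p}h_\ell^{\alpha p}$ against $\psi$ forces $\psi\asymp h_\ell^{\alpha p/(p+1)}=h_\ell^{\alpha_p}$, so both error-related terms become $O(h_\ell^{\alpha_p}h_\ell^{-d})$, and choosing the implied constant small enough keeps $2\psi + K2^{d/2}h_\ell + a$ below $\delta$ under the hypothesis $a<\delta-K2^{d/2}h_\ell$. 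Collecting terms gives $c_1 h_\ell^{-(d-1)}$ from $K2^{d/2}h_\ell\cdot h_\ell^{-d}$, $c_2 a\,h_\ell^{-d}$ from the $a$-term, and $c_3 h_\ell^{\alpha_p}h_\ell^{-d}$ from the $\psi$-terms. The step I expect to require the most care is the cell-wise transfer in the first two paragraphs: one must pass from ``$\hat f_\ell^\square$ small \emph{somewhere} in $\square$'' to a statement about $f$ being small \emph{everywhere} in $\square$ (up to error) in a way that, after taking expectations and integrating over the cell, leaves only the single-point moment $\mathbb{E}[\hat\varepsilon_\ell^\square(y)^p]$ --- precisely the quantity whose cell-wise sum \cref{proposition:epsilon_ell-alpha_rate_sufficent_condition} controls --- since a supremum-in-space version of the random error is \emph{not} bounded by the stated assumptions.
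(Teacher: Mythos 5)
Your proposal reproduces the paper's argument step for step: the triangle-inequality chain through $f$, the pointwise implication $|f(y)|\leq 2\hat\varepsilon^\square_\ell(y)+\hat\varepsilon^\square_\ell(x)+K2^{d/2}h_\ell+a$, the $\psi$-threshold split with Markov applied to the $p$-th moments (using $\inf_{x\in\square}\hat\varepsilon^\square_\ell(x)\leq\hat\varepsilon^\square_\ell(y)$ so only the pointwise moment survives), averaging over $y\in\square$ and summing cells, and finally invoking \cref{proposition:epsilon_ell-alpha_rate_sufficent_condition} and \cref{assumption:f_int_bound}. One small point in your favour: your balanced choice $\psi\asymp h_\ell^{\alpha p/(p+1)}=h_\ell^{\alpha_p}$ is the one that actually makes both error terms $O(h_\ell^{\alpha_p-d})$; the paper's displayed $\psi = c\,2^{-\ell(d+\alpha p)/(p+1)}$ carries a spurious $d$ in the exponent (for finite $p$ this would give $\psi^{-p}h_\ell^{\alpha p-d}\gg h_\ell^{\alpha_p-d}$), so the written formula there appears to be an algebra slip while the stated conclusion matches yours.
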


    One can see that by instead using cell-wise approximations we obtain, in expectation, $\mathcal{O}( h_{\ell}^{d-\alpha_p })$ extra cells due to approximation of $f$. This directly corresponds to an increase in work required to estimate the target level-set when compared to just utilising the true values of the indicator. Note that when $\alpha_p \geq 1$ this term is bounded by $\mathcal{O}(h_{\ell}^{-(d-1)})$ so that this term should not affect the overall work rate.

Making use of the above bounds on the number of cells for a given level of uniform refinement, the following lemma shows that we can obtain optimal works rates (in the sense of \cref{remark:work_optimal}), as long as the sequence of refinement criteria $\{a_{\ell}\}_{\ell=\ell_0}^{{L} - 1}$ is chosen correctly.

\begin{lemma} \label{lemma:work_bound_full}
    Assume that $f$ is Lipschitz on $\bar{D}$ with Lipschitz constant $K$, the assumptions of \cref{proposition:epsilon_ell-alpha_rate_sufficent_condition} hold, and \cref{assumption:f_int_bound} holds for fixed $\delta, \rho_0 > 0$.
    Further, assume that the sequence of non-negative refinement criteria $\{a_{\ell}\}_{\ell=\ell_0}^{{L} -1}$ is chosen such that for each $\ell \myin \{\ell_{0}, \ldots, {L} -1\}$
    \begin{align} \label{eq:a_work_upper_bound}
            a_{\ell}
        \leq
                c_2' h_{\ell}^{1 - \alpha}
    \end{align}
    for some constant $ c_2' > 0$, and a given $\ell_{0} \in \mathbb{N}$.
    Then the expected work of using \cref{algo:main_algo} to refine a uniform refinement $U_0$ at most ${L}$ additional times can be bounded as follows
    \begin{align}
            \sum_{\square \in U_0}
            \mathbb{E}[W_{0}^{\square}]
        \leq
            b h_{\ell_{0}}^{-(\frac{\alpha}{\beta} + d)} + c h_{{L}}^{- (\frac{\alpha}{\beta} + d - 1)  }
    \end{align}
    for some constants $b,c>0$ independent of ${L}$.
    \begin{proof}
        We note that the algorithm refines uniformly until we reach level $\ell = \ell_{0}$. Thus, taking $M_\ell = M_0 h_{\ell}^{-\frac{\alpha}{\beta}}$, under our definition of the work we have that
        \begin{align*}
                \sum_{\square \in U_0}
                    \mathbb{E}[W_{0}^{\square}]
            \,=\,
                N M_0 \left( \sum_{k = 0}^{\ell_0 - 1} h_{k}^{-(\frac{\alpha}{\beta} + d)} \right)
                +
                \sum_{\square \in U_{\ell_{0}} }
                \mathbb{E}[W_{\ell_{0}}^{\square}]
            \,\leq\,
                b h_{\ell_{0}}^{-(\frac{\alpha}{\beta} + d)}
                +
                \sum_{\square \in U_{\ell_{0}} }
                \mathbb{E}[W_{\ell_{0}}^{\square}]
        \end{align*}
        for some constant $b>0$ independent of $L$,
        where we bounded the first sum using the fact that $h_{\ell}$ is geometrically decreasing in $\ell$.
        For brevity, let $\gamma_\ell =  h_{\ell+1}^{- d} h_{\ell}^{d}$, which is the number of cells obtained from refining a cell at level $\ell$. Further, we note that the number of cells obtained from a refinement is bounded so that $\exists \gamma \in \mathbb{N}$ such that $\gamma_\ell \leq \gamma$ for each $\ell$.
        By expanding the definition of the work on a cell $W_\ell^{\square}$ from \cref{equation:def_work_recursive}, we find that
        \begin{align*}
                \sum_{\square \in U_{\ell_{0}} }
                \mathbb{E}[W_{\ell_{0}}^{\square}]
            &\leq
                b h_{\ell_{0}}^{-(\frac{\alpha}{\beta} + d)} +
                    N M_0
                    \sum^{{L} -1}_{\ell=\ell_{0}}
                        \gamma_{\ell} h_{\ell}^{-\frac{\alpha}{\beta} }
                        \sum_{\square_\ell \in U_{\ell}}
                            \mathbb{E}\Bigg[ \sup_{x\in\square_\ell} \ind{|\hat{f}^{\square_\ell}_{\ell}(x)|<a_{\ell} h^{\alpha}_{\ell}}\Bigg]
            \begin{details}
            \\&\leq
                b h_{\ell_{0}}^{-(\frac{\alpha}{\beta} + d)} +
                    N  c_1
                    \sum^{{L} -1}_{\ell=\ell_{0}}
                         \gamma_{\ell}
                         h_{\ell}^{-(\frac{\alpha}{\beta} + d-1)}
                +
                     N c_2
                    \sum^{{L} -1}_{\ell=\ell_{0}}
                         \gamma_{\ell}
                         h_{\ell}^{-(\frac{\alpha}{\beta} +d)}
                             (a_{\ell} h^{\alpha}_{\ell})
                \\&\hspace{2.5cm}+
                    N c_3
                    \sum^{{L} -1}_{\ell=\ell_{0}}
                         \gamma_{\ell}
                         h_{\ell}^{\alpha  \left( \frac{p}{p + 1} \right) -(\frac{\alpha}{\beta} + d)}
            \end{details}
            \\&\leq
                b  h_{\ell_{0}}^{-(\frac{\alpha}{\beta} + d)} +
                     c_1 h_{{L}}^{-(\frac{\alpha}{\beta} + d-1)}
                +
                     c_2
                    \sum^{{L} -1}_{\ell=\ell_{0}}
                         h_{\ell}^{\alpha  -(\frac{\alpha}{\beta} +d)}
                             a_{\ell}
                +
                    c_3
                    \sum^{{L} -1}_{\ell=\ell_{0}}
                         h_{\ell}^{\alpha  \left( \frac{p}{p + 1} \right) -(\frac{\alpha}{\beta} + d)}
        \end{align*}
        for some constants $c_1, c_2, c_3 > 0$ independent of $L$,
        where we used \cref{corollary:num_cells_fhat_ell} (assuming $h^{\alpha}_{\ell} a_{\ell} < \delta - K 2^{d/2} h_{\ell }$), and we bounded the second term using the fact that $h_{\ell}$ is geometrically decreasing in $\ell$. 
        In the case when $h^{\alpha}_{\ell} a_{\ell} < \delta - K 2^{d/2} h_{\ell }$ is not satisfied at $\ell = \ell_0,\ldots \ell_0'$, and since 
        $h^{\alpha}_{\ell} a_{\ell} \leq c_2' h_\ell$ and $h_\ell$ is geometrically decreasing, the condition will hold for large enough $\ell_0'$ independently of $\ell_0$ and $L$, hence the partial sum up to this level will be bounded by the other terms which increase with $L$.
        In order to bound the term involving $c_3$, we note that
        \begin{align*}
            \sum^{{L} -1}_{\ell=\ell_{0}}
                         h_{\ell}^{\alpha \left( \frac{p}{p + 1} \right) -(\frac{\alpha}{\beta} + d)}
            \leq
                \begin{cases}
                        c_3^{+} h_{{L}}^{\alpha  \left( \frac{p}{p + 1} \right) -(\frac{\alpha}{\beta} + d)}
                        &
                        \text{if } (\frac{\alpha}{\beta} + d) >  \alpha \left( \frac{p}{p + 1} \right)
                    \\
                        {L} - \ell_{0}
                        &
                        \text{if } (\frac{\alpha}{\beta} + d) \leq  \alpha  \left( \frac{p}{p + 1} \right)
                \end{cases}
        \end{align*}
        for some constant $c_3^{+} > 0$ independent of ${L}$. Since $\alpha_p > 1$ this term is thus bounded above by at most $c_{3}^{*} h_{{L}}^{- (\frac{\alpha}{\beta} + d - 1)}$ for some constant $c_3^{*} > 0$ independent of ${L}$, and thus matches the required rate.

        We are able to control the term involving $c_2$ via our choice of refinement criteria. In particular, for each $\ell \myin \{\ell_{0}, \ldots, {L} -1\}$, the criteria $a_{\ell}$ satisfying
        \begin{align*}
                \sum^{{L} -1}_{\ell=\ell_{0}}
                    h_{\ell}^{\alpha  -(\frac{\alpha}{\beta} +d)} a_{\ell}
            &\leq
                c_2^{*} h_{{L}}^{ \min\left\{ 1, \alpha\left(\frac{p}{p+1}\right)\right\} - (\frac{\alpha}{\beta} + d)  }
        \end{align*}
        for some constant $c_2^{*} > 0$ will ensure the optimal work rate is attained.
        Letting $\bar{\alpha}_p = \min\left\{ 1, \alpha\left(\frac{p}{p+1}\right) \right\}$ for brevity, we can rewrite the sum on the LHS of the above as
        \begin{align*}
                \sum^{{L} -1}_{\ell=\ell_{0}}
                    h_{\ell}^{\bar{\alpha}_p - (\frac{\alpha}{\beta} + d)  }
                    \left(a_{\ell} h_{\ell}^{\alpha - \bar{\alpha}_p } \right)
        \end{align*}
        to see that \cref{eq:a_work_upper_bound} ensures that he optimal work rate is attained, since $\alpha_p > 1$, concluding the proof.
    \end{proof}
\end{lemma}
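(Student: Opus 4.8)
The plan is to unroll the recursive definition of the work in \eqref{equation:def_work_recursive} down to the base level, splitting the purely uniform refinement phase (levels $0,\dots,\ell_0-1$) from the subsequent adaptive phase (levels $\ell_0,\dots,L-1$). In the uniform phase every cell of $U_k$ is refined, so this part contributes $N M_0\sum_{k=0}^{\ell_0-1}h_k^{-(\alpha/\beta+d)}$, which by geometric decay of $h_k$ is controlled by its last term; together with the level-$\ell_0$ cost $N M_{\ell_0}h_{\ell_0}^{-d}=N M_0 h_{\ell_0}^{-(\alpha/\beta+d)}$ this accounts for the $b\,h_{\ell_0}^{-(\alpha/\beta+d)}$ term in the claim, so that it then suffices to bound $\sum_{\square\in U_{\ell_0}}\mathbb{E}[W_{\ell_0}^{\square}]$.

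For the adaptive phase the key observation is that any cell of side $h_\ell$ ever created by \cref{algo:main_algo} is also a cell of the uniform tessellation $U_\ell$, and that a level-$(\ell+1)$ cell arises only as one of the (at most $\gamma$, since $\gamma_\ell\le\gamma$) children of a level-$\ell$ cell $\square$ meeting the refinement criterion $\hat\delta^{\square}_\ell<a_\ell$. Since $\hat\delta^{\square}_\ell<a_\ell$ forces $\inf_{x\in\square}|\hat f^{\square}_\ell(x)|<a_\ell h_\ell^{\alpha}$, and the estimates $\hat f^{\square}_\ell=I^{\square}_\ell\tilde f_\ell$ are well-defined on every $\square\in U_\ell$ and not merely on those actually visited, monotonicity of indicators gives, almost surely,
\[
\#\{\text{level-}\ell\text{ cells created}\}\;\leq\;\gamma\sum_{\square\in U_{\ell-1}}\ind{\hat\delta^{\square}_{\ell-1}<a_{\ell-1}}\;\leq\;\gamma\sum_{\square\in U_{\ell-1}}\sup_{x\in\square}\ind{|\hat f^{\square}_{\ell-1}(x)|\leq a_{\ell-1}h_{\ell-1}^{\alpha}}.
\]
Taking expectations and applying \cref{corollary:num_cells_fhat_ell} with $a_{\ell-1}h_{\ell-1}^{\alpha}$ playing the role of $a$ then bounds the expected number of level-$\ell$ cells by $c_1 h_{\ell-1}^{-(d-1)}+c_2 a_{\ell-1}h_{\ell-1}^{\alpha-d}+c_3 h_{\ell-1}^{\alpha_p-d}$, up to a renaming of constants.

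Multiplying by the per-cell cost $N M_\ell\propto N h_\ell^{-\alpha/\beta}$ and summing over $\ell\in\{\ell_0,\dots,L-1\}$ produces three geometric-type sums. The $c_1$ term gives $\sum_\ell h_\ell^{-(\alpha/\beta+d-1)}$, which collapses to $\mathcal{O}(h_L^{-(\alpha/\beta+d-1)})$. The $c_3$ term gives $\sum_\ell h_\ell^{\alpha_p-(\alpha/\beta+d)}$, which is either geometrically summable — then it is $\mathcal{O}(h_L^{\alpha_p-(\alpha/\beta+d)})$, and this is $\mathcal{O}(h_L^{-(\alpha/\beta+d-1)})$ precisely because $\alpha_p>1$ — or it grows only linearly in $L$, still dominated by $h_L^{-(\alpha/\beta+d-1)}$ since $\alpha/\beta+d-1>0$. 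The $c_2$ term is where hypothesis \eqref{eq:a_work_upper_bound} enters: writing its summand as $h_\ell^{\bar{\alpha}_p-(\alpha/\beta+d)}\big(a_\ell h_\ell^{\alpha-\bar{\alpha}_p}\big)$ with $\bar{\alpha}_p=\min\{1,\alpha_p\}$ (which equals $1$ here since $\alpha_p>1$), the bound $a_\ell\le c_2'h_\ell^{1-\alpha}$ makes the parenthetical factor bounded, leaving a geometric sum that again collapses to $\mathcal{O}(h_L^{-(\alpha/\beta+d-1)})$. Summing all contributions yields the stated estimate.

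One technical point needs care: \cref{corollary:num_cells_fhat_ell} requires $a_{\ell-1}h_{\ell-1}^{\alpha}<\delta-K2^{d/2}h_{\ell-1}$, which may fail at the first few adaptive levels; but since $a_\ell h_\ell^{\alpha}\le c_2'h_\ell\to0$ geometrically, this fails only below some fixed level $\ell_0'$ independent of $\ell_0$ and $L$, and the finitely many exceptional levels contribute a bounded amount absorbed into the constants. I expect the main difficulty to be not any individual estimate but the bookkeeping of the recursion — justifying the replacement of the sum over cells actually visited by the full sum over $U_{\ell-1}$, and tracking how the refinement factor $\gamma_\ell$, the index shift between levels $\ell$ and $\ell-1$, and the choice $M_\ell=M_0h_\ell^{-\alpha/\beta}$ combine so that every term lands on the single target rate $h_L^{-(\alpha/\beta+d-1)}$.
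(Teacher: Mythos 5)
Your proposal is correct and follows essentially the same route as the paper's proof: split off the uniform phase, bound the number of refined cells via \cref{corollary:num_cells_fhat_ell} applied with $a=a_\ell h_\ell^{\alpha}$, and handle the three resulting sums exactly as the paper does (geometric collapse for the $c_1$ and $c_3$ terms using $\alpha_p>1$, and hypothesis \eqref{eq:a_work_upper_bound} for the $c_2$ term), including the same treatment of the finitely many levels where the smallness condition on $a_\ell h_\ell^{\alpha}$ may fail. The only difference is cosmetic bookkeeping (indexing the cell count by the parent level rather than the child level).
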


\cref{lemma:work_bound_full} ensures that our adaptive refinement scheme for level-set approximation is work-optimal, in the sense that the work rate is the same as the rate obtained by setting $a=0$ in \cref{lemma:num_cells_f<a}, which assumes exact knowledge of $f$.

\subsection{Error Analysis} \label{section:error}

In this section, we focus on bounding the error of our adaptive sampling scheme for level-set estimation as given in \cref{algo:main_algo}. Before we are able to perform the error analysis, we require some notion of distance between the target level set $\mathcal{L}_0$ and the approximated level-sets generated by our adaptive sampling scheme. To that end, consider a (not necessarily uniform) collection of cells $\mathcal{R}_\ell$ for which the smallest cell-size is $h_\ell$. For such a collection of cells with cell-wise functional approximations $\hat{f}^{\square}_{\ell}(x)$ for each cell $\square \in \mathcal{R}_\ell$, we define the true and approximated sets
\begin{align*}
    \mathcal{L}_{\mathrm{in}} := 
        \left\{ 
            x \in \bar{D} 
        \,\, \Big\vert \,\,
            f(x) \leq 0
        \right\}
    ; \,\,\,\,\,\,\,\,
        \hat{\mathcal{L}}_{\ell,\mathrm{in}}^{\square} := 
            \left\{ 
                x \in \square
            \,\, \Big\vert \,\,
                \hat{f}^{\square}_{\ell}(x) \leq 0
            \right\}
    ; \,\,\,\,\,\,\,\,
        \hat{\mathcal{L}}_{\ell,\mathrm{in}} := \bigcup_{\square \in \mathcal{R}_\ell} \hat{\mathcal{L}}_{\ell,\mathrm{in}}^{\square}
\end{align*}
where each set $\hat{\mathcal{L}}_{\ell,\mathrm{in}}^{\square}$ is disjoint so that, for any $x \myin \hat{D}$,
\begin{align*}
    \mathbb{P}[ x \in \hat{\mathcal{L}}_{\ell,\mathrm{in}}] 
        = 
            \sum_{\square \in \mathcal{R}_\ell} \ind{x \in \square} \cdot \mathbb{P}[ x \in \hat{\mathcal{L}}_{\ell,\mathrm{in}}]
        =
            \sum_{\square \in \mathcal{R}_\ell} \mathbb{P}[ x \in \hat{\mathcal{L}}_{\ell,\mathrm{in}}^{\square}].
\end{align*}

With these sets defined, we take as an error metric the following random function
\begin{equation} \label{def:error_DELTAx}
    \Delta_\ell(x) := 
        \left\vert \ind{x \in\mathcal{L}_{\mathrm{in}}} - \ind{x \in  \hat{\mathcal{L}}_{\ell,\mathrm{in}}} \right\vert
\end{equation}
which identifies any point $x \myin \hat{D}$ where there is a difference in sign (taking 0 as negative) of the target function $f$ and the estimate $\hat{f}_{\ell}^{\square}$ corresponding to a cell with $\square \in \mathcal{R}_\ell$.
From this, take the following measure of the accuracy of \cref{algo:main_algo}
\begin{equation}
    \label{eq:error_metric}
    \int_{\bar{D}}
                \Delta_\ell(x)
            d\mu(x)
\end{equation}
which measures the volume of the (random) region of the domain $\bar{D}$ where $f$ and $\hat{f}_{\ell}^{\square}$ have different sign.
This error measure can be localised. Indeed, it holds that
\begin{align*}
        \int_{\bar{D}}
            \mathbb{E}[\Delta_\ell(x)]
        d\mu(x)
    &=
        \sum_{\square \in \mathcal{R}_\ell}
            \int_{\square}
                 \mathbb{E} \left[ \Delta_\ell^{\square}(x) \right]
            d\mu(x)
\end{align*}
where we assume $\mathcal{R}_\ell$ contains the whole domain, and
\(
    \Delta_{\ell}^{\square}(x) := 
        \left\vert \ind{x \in\mathcal{L}_{\mathrm{in}}} - \ind{x \in  \hat{\mathcal{L}}_{\ell,\mathrm{in}}^{\square}} \right\vert
\)
is the cell-wise version of \cref{def:error_DELTAx}.
Hence, it is enough to compute the error cell-wise using the metric
\begin{equation*}
    \int_{\square}
            \mathbb{E}\left[
                \Delta_\ell^{\square}(x)
            \right]
            d\mu(x)
\end{equation*}
over each cell $\square \in \mathcal{R}_\ell$. 
\begin{remark} \label{remark:error_d-1}
    Since we are actually interested in the points in $\square$ for which the zero level-set of the target and estimated functions differ, one might consider instead using 
    $\Delta_\ell(x) = \left\vert \ind{f(x)=0} - \ind{\hat{f}_{\ell}^{\square}(x)=0} \right\vert$. 
    The issue with this choice is that since $\mathcal{L}_0$ is of Hausdorff dimension $d - 1$, picking any function such that $\hat{f}_{\ell}^{\square}(x) \neq 0$ $\forall x \in \square$ means that $\int_{\square} \Delta_\ell(x) d\mu(x) = \int_{\square} \ind{f(x)=0} d\mu(x) = 0$, rendering the error metric uninformative. Our choice of $\Delta_\ell(x)$ as in \cref{def:error_DELTAx} does not suffer from this limitation provided that $f$ changes sign across the level-set.
\end{remark}

With this setup, we define the (expected) error of the method on a cell $\square \in \mathcal{R}_\ell$ in a similar way as for the work: by the recursive formula
\begin{align} \label{eq:error_metric_recursive}
        \mathbb{E}[
            E_{\ell}^{\square}
        ]
    &= 
            \int_{\square}
            \mathbb{E}\left[
                \ind{\hat{\delta}_{\ell}^{\square} \geq a_\ell}
            \Delta_\ell^{\square}(x)
            \right]
            d\mu(x)
        +
            \sum_{\square' \in \mathcal{R}(\square)}
            \mathbb{E}\left[
                \ind{\hat{\delta}_{\ell}^{\square} < a_\ell}
                E_{\ell+1}^{\square'}
            \right]
\end{align}
where we swapped the order of integration by Fubini's theorem. Note that in the second term the indicator is still over the cell $\square \in \mathcal{R}_\ell$ but we consider the error of the refined cells $\square' \in \mathcal{R}(\square)$ instead.

The following proposition establishes that a level-$\ell$ global uniform refinement has an integrated expected error of $\mathcal{O}\left(h_\ell^{\alpha_p}\right)$ in our chosen error metric \cref{eq:error_metric_recursive}.

\begin{proposition} \label{proposition:error_bound_non_adaptive}
    Let the assumptions of \cref{proposition:epsilon_ell-alpha_rate_sufficent_condition} hold. Then, a level-$\ell$ uniform refinement $U_\ell$ satisfies
    \begin{align*}
    \sum_{\square \in U_\ell}
        \int_{\square}
            \mathbb{E}[\Delta^{\square}_\ell(x)] 
            d\mu(x)
        \leq 
            c_{\Delta} h_\ell^{\alpha_{p}}
    \end{align*}
    for some constant $c_{\Delta}>0$ independent of the level $\ell$.
    \begin{proof}
        Using the definition of our error, we find that 
        \begin{align*}
            \mathbb{E}[\Delta_{\ell}^{\square}(x)] 
        &\leq 
            \mathbb{E}\left[\ind{|f(x) - \hat{f}_\ell^{\square}(x)| \geq |{f}(x)|}\right]
        =
            \mathbb{E}\left[\ind{\hat{\varepsilon}_\ell^{\square}(x) \geq |{f}(x)|}\right]
        \end{align*}
        where we write $\hat{\varepsilon}_\ell^{\square}(x) = |f(x) - \hat{f}_\ell^{\square}(x)|$ for brevity. Now, for any choice of $\psi$ we have that
        \begin{align*}
            \mathbb{E}[\Delta_{\ell}^{\square}(x)] 
        &\leq 
            \mathbb{E}\left[
                \ind{|\hat{\varepsilon}_\ell^{\square}(x)| \geq |f(x)|}
                \ind{|f(x)| < \psi}
            \right]
            +
            \mathbb{E}\left[
                \ind{|\hat{\varepsilon}_\ell^{\square}(x)| \geq |f(x)|}
                \ind{|f(x)| \geq \psi}
            \right]
        \\&\leq 
            \ind{|f(x)| < \psi}
            +
            \mathbb{P}\left[
                |\hat{\varepsilon}_\ell^{\square}(x)| \geq \psi
            \right]
        \begin{details}
        \\&\leq 
            \ind{|{f}(x)| < \psi}
            + 
            \mathbb{P}\left[
                |f(x) - \hat{f}_\ell^{\square}(x)| \geq \psi
            \right]
        \end{details}
        \\&\leq 
            \ind{|{f}(x)| < \psi}
            + 
            \psi^{-p}
                \mathbb{E}\left[\left|f(x) - \hat{f}_\ell^{\square}(x)\right|^{p}\right]
        \end{align*}
        where we used the Markov inequality in the final step.
        Taking the integral over the given cell and summing over all cells of that size, we obtain
        \begin{align*}
            \sum_{\square \in U_\ell}
                    \int_{\square}
                        \mathbb{E}[\Delta^{\square}_\ell(x)] 
                    d\mu(x)
            &\leq
                \rho_0  \psi^{} 
                + \psi^{-p} h_{\ell}^{\alpha p}
        \end{align*}
        where we used \cref{assumption:f_int_bound,proposition:epsilon_ell-alpha_rate_sufficent_condition}.
        Since the above holds for any choice of $\psi$, we can choose $\psi = \min\left\{ 1, \delta \right\} h_\ell^{\alpha_p}$ to get the final two terms to be of the same order, completing the proof.
    \end{proof}
\end{proposition}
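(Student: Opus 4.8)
The plan is to control the sign-mismatch indicator $\Delta_\ell^\square(x)$ pointwise by an event that involves only the pointwise approximation error $\hat\varepsilon_\ell^\square(x):=|f(x)-\hat f^\square_\ell(x)|$, and then to run the standard dichotomy ``either $f$ is small near its level set, or the estimator must be badly wrong''. First I would note that, for $x\in\square$, $\Delta_\ell^\square(x)=1$ precisely when $f(x)$ and $\hat f^\square_\ell(x)$ have opposite signs (with $0$ counted as negative); in either of the two cases, whichever of $f(x),\hat f^\square_\ell(x)$ is nonnegative is separated from $f(x)$ by at least $|f(x)|$, so $\hat\varepsilon_\ell^\square(x)\geq|f(x)|$. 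Hence $\Delta_\ell^\square(x)\leq\ind{\hat\varepsilon_\ell^\square(x)\geq|f(x)|}$ almost surely. For an arbitrary threshold $\psi>0$ I would then split $\ind{\hat\varepsilon_\ell^\square(x)\geq|f(x)|}\leq\ind{|f(x)|<\psi}+\ind{\hat\varepsilon_\ell^\square(x)\geq\psi}$, take expectations, and bound the second term by Markov's inequality to obtain
\begin{align*}
    \mathbb{E}[\Delta_\ell^\square(x)]\;\leq\;\ind{|f(x)|<\psi}\;+\;\psi^{-p}\,\mathbb{E}\!\left[\hat\varepsilon_\ell^\square(x)^p\right].
\end{align*}

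Next I would integrate over $\square$ and sum over $U_\ell$. The first contribution is $\mu(\{x\in\bar D:|f(x)|<\psi\})\leq\rho_0\psi$ by \cref{assumption:f_int_bound}, which is valid provided $\psi<\delta$; the second is $\psi^{-p}\sum_{\square\in U_\ell}\|f-\hat f^\square_\ell\|_{L^p(\Omega,L^p(\square))}^p\leq\psi^{-p}\,\hat c_I^{\,p}\,h_\ell^{\alpha p}$ by \cref{proposition:epsilon_ell-alpha_rate_sufficent_condition}. It then remains to balance $\rho_0\psi$ against $\psi^{-p}\hat c_I^{\,p}h_\ell^{\alpha p}$; the minimiser is of order $h_\ell^{\alpha p/(p+1)}=h_\ell^{\alpha_p}$, so choosing $\psi=\min\{1,\delta\}\,h_\ell^{\alpha_p}$ makes both terms $\mathcal{O}(h_\ell^{\alpha_p})$ (using $\alpha p-\alpha_p p=\alpha p/(p+1)=\alpha_p$), while the prefactor $\min\{1,\delta\}$ ensures $\psi<\delta$ for every $\ell$ so that the appeal to \cref{assumption:f_int_bound} is legitimate throughout. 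Collecting the constants yields the claimed bound $c_\Delta h_\ell^{\alpha_p}$.

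The only genuinely non-mechanical step is the first one: recognising that a sign disagreement forces $\hat\varepsilon_\ell^\square(x)\geq|f(x)|$, which converts the set-geometric error metric into something on which the $L^p$ bound of \cref{proposition:epsilon_ell-alpha_rate_sufficent_condition} can act. Everything after that is the routine Markov-inequality-plus-level-set-thinning estimate, with the remaining effort being only the bookkeeping of constants and of the constraint $\psi<\delta$.
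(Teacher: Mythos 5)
Your argument coincides with the paper's proof step for step: the pointwise bound $\Delta_\ell^\square(x)\leq\ind{\hat\varepsilon_\ell^\square(x)\geq|f(x)|}$, the split at a threshold $\psi$, Markov's inequality, integration and summation via \cref{assumption:f_int_bound} and \cref{proposition:epsilon_ell-alpha_rate_sufficent_condition}, and the choice $\psi=\min\{1,\delta\}h_\ell^{\alpha_p}$. It is correct, and if anything slightly more explicit than the paper (you spell out why a sign mismatch forces $\hat\varepsilon_\ell^\square(x)\geq|f(x)|$ and verify the exponent arithmetic $\alpha p-\alpha_p p=\alpha_p$).
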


Building on the previous result, the following lemma tells us that, for certain choices of refinement criteria sequences $\{a_{\ell}\}_{\ell=\ell_0}^{{L} - 1}$, the result of our adaptive algorithm has an integrated expected error of $\mathcal{O}\left(h_{{L}}^{\alpha_p}\right)$ in our chosen error metric.

\begin{lemma} \label{lemma:full_error_bound}
    Let the assumptions of \cref{proposition:epsilon_ell-alpha_rate_sufficent_condition} hold.
    Then for any ${L}\myin \mathbb{N}$ with $\ell_{0} < {L}$, choosing the sequence of a refinement criteria $\{a_{\ell}\}_{\ell=\ell_0}^{{L} - 1}$ such that
    \begin{equation} \label{eq:a_error_lower_bound}
                \sum_{\ell = \ell_{0}}^{{L} - 1}
                    a_{\ell}^{-p}
            <
                c'
                h_{{L}}^{\alpha_p}
    \end{equation}
    for some constant $c'>0$ independent of $L$, ensures that the expected error obtained from using \cref{algo:main_algo} on a uniform refinement $U_{\ell_{0}}$ satisfies
    \begin{align*}
            \sum_{\square \in U_0}
            \mathbb{E}[
                E_{0}^{\square}
            ]
        \leq
            c h_{{L}}^{\alpha_p}
    \end{align*}
    for some constant $c>0$.
    \begin{proof}
        For a given level $\ell$ write
        $\varepsilon_{\ell}^{\square} = 
            \left(
            h_{\ell}^{-d}
            \int_{\square} 
                \mathbb{E}\left[\left| f(x) - \hat{f}^{\square}_{\ell} \right|^p\right] 
            d\mu(x)
            \right)^{1/p}
        $
        for each $\square \in U_\ell$ where $U_\ell$ is a level-$\ell$ uniform refinement. We note that
        \begin{align*}
                \mathbb{E}\left[
                    \ind{\hat{\delta}_{\ell}^{\square} \geq a_\ell}
                        \Delta_\ell^{\square}(x) 
                    \right]
            &=
                \mathbb{E}\left[
                    \ind{\hat{\delta}_{\ell}^{\square} \geq a_\ell}
                        \left\vert \ind{f(x) \leq 0} - \ind{\hat{f}_{\ell}^{\square}(x) \leq 0} \right\vert
                    \right]
            \\&\leq
                    \mathbb{E}\left[
                        \ind{\inf_{x \in \square}|\hat{f}_{\ell}^{\square}(x)| \geq a_\ell \varepsilon^{\square}_{\ell}}
                        \ind{|f(x) - \hat{f}_{\ell}^{\square}(x)| > |\hat{f}_{\ell}^{\square}(x)|}
                    \right]
            \\&=
                    \mathbb{P}\left[
                        |f(x) - \hat{f}_{\ell}^{\square}(x)| > a_\ell \varepsilon_{\ell}^{\square}
                    \right]
            \\&\leq
                    \min \left\{ 1, \,
                        \left(a_\ell \varepsilon_{\ell}^{\square} \right)^{-p}
                        \mathbb{E}\left[\left|
                        f(x) - \hat{f}_{\ell}^{\square}(x)
                        \right|^{p}\right]
                    \right\}
        \end{align*}
        where we used the Markov inequality in the final step. Thus we obtain that
        \begin{align} \label{eq:error_bound_not_refined}
            \sum_{\square \in U_\ell}
            \int_{\square}
                \mathbb{E}\left[
                    \ind{\hat{\delta}_{\ell}^{\square} \geq a_\ell}
                \Delta_\ell^{\square}(x)
                \right]
                d\mu(x)
            &\leq 
                a_\ell^{-p}
        \end{align}
        by the definition of $\varepsilon_{\ell}^{\square}$.
        Thus, the integrated expected error of a cell which our method does not refine at a level $\ell$ can be bounded using the refinement criteria $a_\ell$.
        With this in hand, we expand the error to find
        \begin{align*}
                \sum_{\square \in U_{\ell_{0}} }
                \mathbb{E}[
                    E_{\ell_{0}}^{\square}
                ]
            &\leq
                \sum^{{L} -1}_{\ell=\ell_{0}}
                \left(
                \sum_{\square_{\ell} \in U_{\ell}}
                    \int_{\square_{\ell}}
                        \mathbb{E}\left[
                        \Delta_{\ell}^{\square_{\ell}}(x)
                        \right]
                        d\mu(x)
                \right)
                +
                \left(
                \sum_{\square_{{L}} \in U_{{L}}}
                    \int_{\square_{{L}}}
                        \mathbb{E}\left[
                        \Delta_{{L}}^{\square_{{L}}}(x)
                        \right]
                        d\mu(x)
                \right)
            \\&\leq
                    \left(
                    \sum_{\ell = \ell_{0}}^{{L} - 1}
                        a_{\ell}^{-p}
                    \right)
                    + c_{\Delta} h_{{L}}^{\alpha_{p}}
        \end{align*}
        where we used \cref{eq:error_bound_not_refined,proposition:error_bound_non_adaptive}.
        Now, under our assumptions, we have that \cref{eq:a_error_lower_bound} holds and thus obtain the requested error rate with $c = \max\{c', c_{\Delta}\}$.
    \end{proof}
\end{lemma}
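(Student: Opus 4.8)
The plan is to unfold the recursion \eqref{eq:error_metric_recursive} level by level, collapsing the total adaptive error into a sum of per-level ``terminal'' contributions plus the error of a single uniform refinement at the finest level $L$, and then to control each per-level contribution through the refinement criterion by a Markov-inequality argument. The key quantitative fact, established earlier, that I would lean on is \cref{proposition:epsilon_ell-alpha_rate_sufficent_condition}, which gives the global $L^p(\Omega,L^p)$ error bound $\hat{c}_I h_\ell^\alpha$ for the random approximation at level $\ell$, together with \cref{proposition:error_bound_non_adaptive} for the non-adaptive error.

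First I would note that since \cref{algo:main_algo} deterministically refines $U_0$ to $U_{\ell_{0}}$ before taking any decisions, $\sum_{\square\in U_0}\mathbb{E}[E_0^\square]=\sum_{\square\in U_{\ell_{0}}}\mathbb{E}[E_{\ell_{0}}^\square]$. Then, starting from \eqref{eq:error_metric_recursive} and using $E_{\ell+1}^{\square'}\geq 0$ to drop the indicator $\ind{\hat\delta_\ell^\square<a_\ell}\leq 1$ from the second term, I get $\mathbb{E}[E_\ell^\square]\leq\int_\square\mathbb{E}[\ind{\hat\delta_\ell^\square\geq a_\ell}\Delta_\ell^\square(x)]\,d\mu(x)+\sum_{\square'\in\mathcal{R}(\square)}\mathbb{E}[E_{\ell+1}^{\square'}]$. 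Summing over $\square\in U_\ell$, using that refining every cell of $U_\ell$ yields exactly $U_{\ell+1}$, and iterating from $\ell=\ell_{0}$ up to $\ell=L$ (where no refinement occurs, so $\mathbb{E}[E_L^\square]=\int_\square\mathbb{E}[\Delta_L^\square(x)]\,d\mu(x)$) telescopes the residual terms and yields
\begin{align*}
\sum_{\square\in U_{\ell_{0}}}\mathbb{E}[E_{\ell_{0}}^\square]\ \leq\ \sum_{\ell=\ell_{0}}^{L-1}\ \sum_{\square\in U_\ell}\int_\square\mathbb{E}\!\left[\ind{\hat\delta_\ell^\square\geq a_\ell}\,\Delta_\ell^\square(x)\right]d\mu(x)\ +\ \sum_{\square\in U_L}\int_\square\mathbb{E}[\Delta_L^\square(x)]\,d\mu(x),
\end{align*}
and the final sum is $\leq c_{\Delta}h_L^{\alpha_p}$ by \cref{proposition:error_bound_non_adaptive}. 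The bound is wasteful — it effectively pretends every cell of $U_\ell$ is simultaneously ``stopped'' and refined — but monotonicity makes it legitimate.

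The crux is the per-level estimate $\sum_{\square\in U_\ell}\int_\square\mathbb{E}[\ind{\hat\delta_\ell^\square\geq a_\ell}\Delta_\ell^\square(x)]\,d\mu(x)\leq\hat{c}_I^{\,p}\,a_\ell^{-p}$. If a cell is not refined, then by \eqref{eq:delta_hat} $\inf_{y\in\square}|\hat f_\ell^\square(y)|\geq a_\ell h_\ell^\alpha$; and $\Delta_\ell^\square(x)=1$ forces $f(x)$ and $\hat f_\ell^\square(x)$ to lie on opposite sides of $0$, whence $|f(x)-\hat f_\ell^\square(x)|\geq|\hat f_\ell^\square(x)|\geq a_\ell h_\ell^\alpha$ on that event. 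Thus $\ind{\hat\delta_\ell^\square\geq a_\ell}\Delta_\ell^\square(x)\leq\ind{|f(x)-\hat f_\ell^\square(x)|\geq a_\ell h_\ell^\alpha}$, and Markov's inequality with exponent $p$ gives $\mathbb{E}[\,\cdot\,]\leq (a_\ell h_\ell^\alpha)^{-p}\,\mathbb{E}[|f(x)-\hat f_\ell^\square(x)|^p]$. Integrating over $\square$, summing over $U_\ell$, and using $\sum_{\square\in U_\ell}\int_\square\mathbb{E}[|f-\hat f_\ell^\square|^p]\,d\mu=\sum_{\square\in U_\ell}\|f-\hat f_\ell^\square\|_{L^p(\Omega,L^p(\square))}^p\leq\hat{c}_I^{\,p}h_\ell^{\alpha p}$ from \cref{proposition:epsilon_ell-alpha_rate_sufficent_condition}, the two factors $h_\ell^{\pm\alpha p}$ cancel and the claim follows.

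Combining the displays, $\sum_{\square\in U_0}\mathbb{E}[E_0^\square]\leq\hat{c}_I^{\,p}\sum_{\ell=\ell_{0}}^{L-1}a_\ell^{-p}+c_{\Delta}h_L^{\alpha_p}<(\hat{c}_I^{\,p}c'+c_{\Delta})h_L^{\alpha_p}$ by the hypothesis \eqref{eq:a_error_lower_bound}, giving the claim with $c=\hat{c}_I^{\,p}c'+c_{\Delta}$. I expect the genuinely delicate step to be the per-level estimate: one must chain ``not refined $\Rightarrow$ $|\hat f_\ell^\square|$ bounded below'' with ``sign mismatch $\Rightarrow$ $|f-\hat f_\ell^\square|\geq|\hat f_\ell^\square|$'', apply Markov at exactly the exponent $p$ at which the noise is controlled, and then recognise the resulting cell sum as the quantity estimated in \cref{proposition:epsilon_ell-alpha_rate_sufficent_condition} — it is precisely the cancellation of $h_\ell^{\alpha p}$ there that makes the per-level cost depend on $\ell$ only through $a_\ell$, so that the telescoped sum is governed by \eqref{eq:a_error_lower_bound}. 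The recursion unfolding, by contrast, is essentially bookkeeping once one observes that discarding the refinement indicator only inflates the bound.
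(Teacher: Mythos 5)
Your proposal is correct and follows essentially the same route as the paper: unfold the recursion \eqref{eq:error_metric_recursive} into per-level ``stopped'' contributions plus the uniform level-$L$ error, bound the latter by \cref{proposition:error_bound_non_adaptive}, and bound each per-level term by $\mathcal{O}(a_\ell^{-p})$ via the chain ``not refined $\Rightarrow$ $|\hat f_\ell^\square|$ bounded below, sign mismatch $\Rightarrow$ $|f-\hat f_\ell^\square|\geq|\hat f_\ell^\square|$'' followed by Markov at exponent $p$. The only difference is cosmetic: you threshold directly at $a_\ell h_\ell^\alpha$ (consistent with \eqref{eq:delta_hat}) and invoke \cref{proposition:epsilon_ell-alpha_rate_sufficent_condition} to sum the cell errors, whereas the paper normalises cell-by-cell with $\varepsilon_\ell^\square$; both yield the same cancellation and the same conclusion.
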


\subsection{Selection of the Refinement Criteria} \label{subsection:refinement_criteria}

\cref{algo:main_algo} depends on a few user-specified hyperparameters. Whilst some of these are typically known a priori or can be easily estimated (e.g. $\alpha, \beta$) others such as the sequence of refinement criteria $\{a_{\ell}\}_{\ell = \ell_0}^{{L}}$, the maximum number of times to refine ${L}$ a cell, and the base uniform refinement $U_0$ to employ the algorithm on are less trivial to select.
In this subsection we suggest some simple choices for these parameters.

For a choice of optimal refinement criteria, we adapt the criteria utilised in \cite{haji2022adaptive} to obtain a choice of the form
\begin{equation} \label{eq:define_a}
        a_{\ell}
    = 
            c h_{\ell}^{\alpha_p/R} h_{{L}}^{\alpha_p (R-1)/ R} h_{\ell_{0}}^{-\alpha_p/R} h_{\ell}^{-\alpha}
\end{equation}
for some chosen constant $c>0$ and fixed $\ell_{0} \myin \mathbb{N}$,
where $\alpha_p = \alpha\left(\frac{p}{p+1}\right)$ and the parameter $R>1$ controls the strictness of refinement.

The following result ensures that for the choice of refinement criteria \eqref{eq:define_a}, optimal work and error rates are obtained whenever $1 < R < \alpha_p$.

\begin{lemma} \label{lemma:criteria_work_bound}
    For the choice of refinement criteria in \eqref{eq:define_a} with $1 < R < \alpha_p$, we have that for any ${L} \in \mathbb{N}$ choosing 
    \(
        \ell_{0} = \left\lceil {L} \left(1 - \frac{p}{R(p+1)}\right) \right\rceil
    \)
    ensures that, for any $\ell \in \{\ell_{0}, \ldots, {L} - 1\}$, \cref{eq:a_work_upper_bound,eq:a_error_lower_bound} hold for some constants.
    \begin{proof}
    We have that
        \(
                a_{\ell}
            \leq 
                c_2' h_{\ell}^{\min\left\{1, \alpha_p\right\} - \alpha} 
        \)
        for some constant $c_2'>0$, whenever $R < \min\left\{1, \alpha_p\right\}^{-1} \alpha_p = \alpha_p$ since $\alpha_p > 1$. 
        Note that \cref{eq:a_work_upper_bound} also enforces that $ a_{\ell} < (\delta - K 2^{d/2} h_{\ell}) h^{-\alpha}_{\ell}$ for some choice of the constant $c>0$ and a sufficiently refined base refinement $U_{\ell_0}$, i.e. when
        \(
                c h_{L}^{\alpha_p (R-1)/R} + h_{\ell_0} K 2^{d/2}
            <
                \delta.
        \)
        
        Since we have that $R>\left(\frac{p}{p+1}\right) = \frac{\alpha_p}{\alpha}$, $a_{\ell}^{-p}$ is geometrically decreasing in $\ell$ and we can use the bound
        \begin{align*}
                \sum_{\ell = \ell_{0}}^{{L} - 1}
                    a_{\ell}^{-p}
            &\leq
                   c^{*} a_{\ell_{0}}^{-p}
            =
                c^{*} c^{-p} h_{\ell_{0}}^{\alpha p}
                h_{{L}}^{ -\alpha_p p (R-1)/R}
        \end{align*}
        for some constant $c^{*} >0$. 
        To ensure that \cref{eq:a_error_lower_bound}
        it is enough to require that
        \begin{align*}
                c^{*} c^{-p} h_{\ell_{0}}^{\alpha p} 
            &\leq
                b
                h_{{L}}^{\alpha_p (1 + p(R-1)/R)}
            = 
                b
                h_{{L}}^{\alpha p \left(1 - \frac{p}{R(p+1)}\right)}
        \end{align*}
         for some constant $b > 0$.
        Thus we have that, for any choice of $R>\left(\frac{p}{p+1}\right)$, choosing 
        \(
                \ell_{0}
            \geq
                {L} \left(1 - \frac{p}{R(p+1)}\right)
        \)
        ensures \cref{eq:a_error_lower_bound} holds, assuming that $\{h_\ell\}_{\ell = \ell_{0}}^{{L}}$ is a geometrically decreasing sequence. This concludes the proof.
    \end{proof}

    \begin{remark} \label{remark:vartheta_general}
        The proof of \cref{lemma:criteria_work_bound} shows that in practice $\ell_{0} \in \mathbb{N}$ need only satisfy
        \(
                h_{\ell_{0}}
            \leq
                \left(\frac{c_{\Delta}}{c^{*} c^{-p}}\right)^{\frac{1}{\alpha p}}
                h_{{L}}^{\left(1 - \frac{p}{R(p+1)}\right)}
        \)
         where the constant $c_{\Delta}>0$ is as in \cref{proposition:error_bound_non_adaptive}. Thus, if $c_\Delta$ is known or can be estimated, one can directly evaluate the required value of $h_{\ell_{0}}$, i.e. the cell size to be uniformly refined to. This can be done since the other constants are a direct consequence of the refinement criteria itself ($c$ is chosen in the criteria and $c^{*}$ is typically bounded above by $4$ for valid choices of $\alpha_p$ and $R$). Note that in the case where $c_\Delta$ is unknown, the desired rates will still hold asymptotically for the choice of $\ell_0$ provided in \cref{lemma:criteria_work_bound}.
    \end{remark}
\end{lemma}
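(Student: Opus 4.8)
The plan is to verify the two defining inequalities, \cref{eq:a_work_upper_bound} and \cref{eq:a_error_lower_bound}, separately, starting from the explicit form \eqref{eq:define_a} and exploiting that the exponent of $h_\ell$ in $a_\ell$ is $\alpha_p/R - \alpha$, which is negative precisely when $R > \alpha_p/\alpha = p/(p+1)$, so that $a_\ell$ and $a_\ell^{-p}$ are geometric in $\ell$. For the work bound, I would first rewrite $a_\ell = c\, h_{\ell_0}^{-\alpha_p/R} h_L^{\alpha_p(R-1)/R}\, h_\ell^{\alpha_p/R - \alpha}$ and factor out the $\ell$-dependence: since $h_\ell^{\alpha_p/R}$ is decreasing in $\ell$ and $\ell \ge \ell_0$, we get $h_\ell^{\alpha_p/R} \le h_{\ell_0}^{\alpha_p/R}$, hence $a_\ell \le c\, h_L^{\alpha_p(R-1)/R}\, h_\ell^{-\alpha}$. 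Then $a_\ell h_\ell^{\alpha-1} \le c\, h_L^{\alpha_p(R-1)/R}\, h_\ell^{-1} \le c\, h_L^{\alpha_p(R-1)/R - 1}$, and since $\alpha_p(R-1)/R - 1 > 0$ iff $R > \alpha_p/(\alpha_p - 1)$... — actually the cleaner route is the one already sketched in the remark text: show $a_\ell \le c_2' h_\ell^{\min\{1,\alpha_p\} - \alpha}$, which since $\alpha_p > 1$ means $a_\ell \le c_2' h_\ell^{1-\alpha}$, i.e. \cref{eq:a_work_upper_bound}. This reduces to checking that the combined exponent of $h_\ell$ (after bounding $h_{\ell_0}$, $h_L$ away), together with the constant, does not exceed $1-\alpha$; because the $h_{\ell_0}^{-\alpha_p/R}$ and $h_L^{\alpha_p(R-1)/R}$ factors are fixed per run, this is an inequality on constants, forced by choosing $c$ small and $U_{\ell_0}$ fine enough — precisely the side condition $c\, h_L^{\alpha_p(R-1)/R} + h_{\ell_0} K 2^{d/2} < \delta$ recorded in the statement.

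For the error bound I would use geometric summation: since $R > p/(p+1)$ gives $\alpha_p/R - \alpha < 0$, the sequence $a_\ell^{-p} = c^{-p} h_{\ell_0}^{\alpha_p p/R} h_L^{-\alpha_p p(R-1)/R} h_\ell^{p(\alpha - \alpha_p/R)}$ is geometrically decreasing in $\ell$ (the exponent $p(\alpha - \alpha_p/R) > 0$ and $h_\ell \downarrow$), so $\sum_{\ell=\ell_0}^{L-1} a_\ell^{-p} \le c^* a_{\ell_0}^{-p}$ for a constant $c^*$ depending only on the geometric ratio. Evaluating $a_{\ell_0}^{-p}$ from \eqref{eq:define_a} at $\ell = \ell_0$, the $h_{\ell_0}^{-\alpha}$ factor becomes $h_{\ell_0}^{-\alpha p}$ and the $h_{\ell_0}^{-\alpha_p/R}$ factor cancels against the leading $h_{\ell_0}^{\alpha_p/R}$... carefully: $a_{\ell_0} = c\, h_{\ell_0}^{\alpha_p/R} h_L^{\alpha_p(R-1)/R} h_{\ell_0}^{-\alpha_p/R} h_{\ell_0}^{-\alpha} = c\, h_L^{\alpha_p(R-1)/R} h_{\ell_0}^{-\alpha}$, so $a_{\ell_0}^{-p} = c^{-p} h_L^{-\alpha_p p(R-1)/R} h_{\ell_0}^{\alpha p}$. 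Then \cref{eq:a_error_lower_bound} becomes the requirement $c^* c^{-p} h_{\ell_0}^{\alpha p} h_L^{-\alpha_p p(R-1)/R} \le c' h_L^{\alpha_p}$, i.e. $h_{\ell_0}^{\alpha p} \le (c'/(c^* c^{-p}))\, h_L^{\alpha_p(1 + p(R-1)/R)} = (c'/(c^* c^{-p}))\, h_L^{\alpha p(1 - p/(R(p+1)))}$, using $\alpha_p = \alpha p/(p+1)$ to simplify the exponent. Taking $\alpha p$-th roots and recalling $h_\ell$ is geometric (so $h_{\ell_0} \asymp$ a fixed power with exponent $\ell_0$), this holds as soon as $\ell_0 \ge L(1 - p/(R(p+1)))$, which is exactly the stated choice $\ell_0 = \lceil L(1 - p/(R(p+1)))\rceil$.

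The main obstacle is not any single estimate — each is a one-line geometric-series bound — but rather bookkeeping the interplay of the three independent length scales $h_{\ell_0}, h_\ell, h_L$ and making sure the "optimal" choice of $\ell_0$ is simultaneously large enough for the error inequality \cref{eq:a_error_lower_bound} and small enough (equivalently, $R$ constrained to $R < \alpha_p$) for the work inequality \cref{eq:a_work_upper_bound} to hold with a legitimate $\ell$-uniform constant. Concretely one must check: (i) $R > p/(p+1)$ so both $a_\ell$ and $a_\ell^{-p}$ are geometric with the right monotonicity; (ii) $R < \alpha_p$ so that $\min\{1,\alpha_p\} - \alpha = 1 - \alpha$ is actually achieved in the work bound, giving $a_\ell \le c_2' h_\ell^{1-\alpha}$; and (iii) the residual constant inequality forcing $a_\ell h_\ell^\alpha < \delta - K 2^{d/2} h_\ell$ (needed to invoke \cref{corollary:num_cells_fhat_ell} and \cref{lemma:num_cells_f<a}), which is handled by shrinking $c$ and refining $U_{\ell_0}$, as the side condition in the statement records. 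Once these three points are in place the two displayed hypotheses of \cref{lemma:work_bound_full} and \cref{lemma:full_error_bound} follow, and the proof is complete.
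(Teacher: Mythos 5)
Your error-bound half is essentially a verbatim reproduction of the paper's argument: the same observation that $R>p/(p+1)$ makes $a_\ell^{-p}$ geometrically decreasing, the same bound $\sum_{\ell=\ell_0}^{L-1}a_\ell^{-p}\leq c^* a_{\ell_0}^{-p}$ with $a_{\ell_0}=c\,h_L^{\alpha_p(R-1)/R}h_{\ell_0}^{-\alpha}$, and the same reduction of \cref{eq:a_error_lower_bound} to $h_{\ell_0}^{\alpha p}\lesssim h_L^{\alpha p(1-p/(R(p+1)))}$, which the prescribed $\ell_0$ delivers. That part is fine and matches the paper exactly.

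The work-bound half is where you stop short, and the point at which you stop is precisely the delicate one. You reduce \cref{eq:a_work_upper_bound} to ``an inequality on constants \dots fixed per run, forced by choosing $c$ small and $U_{\ell_0}$ fine enough.'' That only yields a constant $c_2'$ that depends on $L$ (both $h_{\ell_0}$ and $h_L$ shrink as $L\to\infty$), whereas \cref{lemma:work_bound_full}, and hence \cref{theorem:final_complexity}, require $c_2'$ uniform in $L$. Written out: since $\alpha_p/R>1$, one has $a_\ell h_\ell^{\alpha-1}=c\,(h_\ell/h_{\ell_0})^{\alpha_p/R-1}\,h_{\ell_0}^{-1}h_L^{\alpha_p(R-1)/R}$, which is maximised at $\ell=\ell_0$ with value $c\,h_L^{\alpha_p(R-1)/R}/h_{\ell_0}$; uniform boundedness therefore demands $h_L^{\alpha_p(R-1)/R}\lesssim h_{\ell_0}\sim h_L^{1-p/(R(p+1))}$, i.e.\ the exponent inequality $\alpha_p(R-1)/R\geq 1-p/(R(p+1))$, equivalently $R(\alpha_p-1)\geq \alpha_p(\alpha-1)/\alpha$. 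This is exactly the kind of condition your abandoned first computation (the one arriving at $R\geq\alpha_p/(\alpha_p-1)$) was circling, and you should not have dropped it: the condition does not follow from $1<R<\alpha_p$ alone (try $p=1$ and $\alpha$ slightly above $2$, so $\alpha_p$ is barely above $1$; then no admissible $R$ satisfies it). To be fair, the paper's own proof asserts $a_\ell\leq c_2'h_\ell^{\min\{1,\alpha_p\}-\alpha}$ ``whenever $R<\alpha_p$'' with no further justification, so you are matching its level of detail --- but a complete proof must carry out this exponent check explicitly and record whatever additional restriction on $(\alpha,p,R)$ it imposes, rather than dismissing it as a constant that can be absorbed per run.
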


\subsection{Cost-Complexity} \label{subsection:selection_base_refinement}

From \cref{section:work,section:error}, one can calculate the cost-\\complexity of \cref{algo:main_algo} for approximating a level-set of Hausdorff dimension $d - 1$. 

\begin{theorem} \label{theorem:final_complexity}
    Let \cref{assumption:f_int_bound} hold for fixed $\delta, \rho_0 > 0$, \cref{assumption:interpolant_rate_alpha,assumption:beta,ass:continuity_of_T} hold for fixed values of $\alpha > \left(\frac{p+1}{p}\right)$, $\beta > 0$, and $C_N>0$ respectively, and assume that $f$ is Lipschitz on $\bar{D}$. Let $\varepsilon$ denote the desired error (in the error metric \cref{eq:error_metric}) of the final level-set estimation.
    Then, selecting ${L}$ such that $h_{{L}} = \mathcal{O} \left( \varepsilon^{1/\alpha_p} \right)$, and choosing the sequence of refinement criteria $\{a_{\ell}\}_{\ell=\ell_0}^{{L} - 1}$ to be as in \eqref{eq:define_a} with
    \(
        \ell_{0} = \left\lceil {L} \left(1 - \frac{p}{R(p+1)}\right) \right\rceil
    \)
    and restricting $1<R<\alpha_p = \alpha \left(\frac{p}{p+1}\right)$, ensures that  using \cref{algo:main_algo} to approximate a level-set of Hausdorff dimension $d - 1$ has a cost-complexity of
    \(
        \mathcal{O} \left( \varepsilon^{-(d-1 +{\alpha}/{\beta}) / \alpha_p} \right).
    \)
    \begin{proof}
        Follows directly from \cref{lemma:criteria_work_bound}.
    \end{proof}
\end{theorem}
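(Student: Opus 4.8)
The plan is to assemble the final cost-complexity estimate by combining the work bound of \cref{lemma:work_bound_full} with the error bound of \cref{lemma:full_error_bound}, using \cref{lemma:criteria_work_bound} to verify that the prescribed choices of $\{a_\ell\}$ and $\ell_0$ make the hypotheses of both lemmas simultaneously true. First I would check that, under the stated restrictions $1 < R < \alpha_p$ and $\ell_0 = \lceil L(1 - \tfrac{p}{R(p+1)})\rceil$, \cref{lemma:criteria_work_bound} applies, so that \cref{eq:a_work_upper_bound,eq:a_error_lower_bound} both hold. Feeding \cref{eq:a_work_upper_bound} into \cref{lemma:work_bound_full} gives an expected total work of $b\,h_{\ell_0}^{-(\alpha/\beta + d)} + c\,h_L^{-(\alpha/\beta + d - 1)}$, and feeding \cref{eq:a_error_lower_bound} into \cref{lemma:full_error_bound} gives an expected error of $\mathcal{O}(h_L^{\alpha_p})$.

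The next step is to convert the error guarantee into a choice of $L$: the bound $\mathbb{E}\big[\int_{\bar D}\Delta_L(x)\,d\mu(x)\big] \le c\,h_L^{\alpha_p}$ means that taking $h_L = \mathcal{O}(\varepsilon^{1/\alpha_p})$ yields error at most $\varepsilon$, as in the statement. It then remains to re-express the work bound in terms of $\varepsilon$. Substituting $h_L = \mathcal{O}(\varepsilon^{1/\alpha_p})$ into the second work term directly gives $\mathcal{O}(\varepsilon^{-(d-1+\alpha/\beta)/\alpha_p})$, which is the claimed rate. The slightly delicate point is the first work term $b\,h_{\ell_0}^{-(\alpha/\beta + d)}$: here I would use $\ell_0 = \lceil L(1 - \tfrac{p}{R(p+1)})\rceil$, so $h_{\ell_0} \sim h_L^{\,1 - p/(R(p+1))}$, giving a contribution of order $h_L^{-(\alpha/\beta + d)(1 - p/(R(p+1)))}$. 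One must verify that the exponent $(\alpha/\beta + d)(1 - \tfrac{p}{R(p+1)})$ does not exceed $(\alpha/\beta + d - 1)$, i.e. that the uniform pre-refinement cost is dominated by the adaptive cost near the level set; this is exactly where the constraint $R < \alpha_p$ (equivalently $R < \tfrac{\alpha p}{p+1}$, so that $\tfrac{p}{R(p+1)} > \tfrac{1}{\alpha}$ and the saving beats the $d-1$ versus $d$ gap) is used, and it is the one inequality I would check carefully.

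The main obstacle, then, is not any single hard estimate but rather the bookkeeping of exponents: confirming that the refinement criteria \eqref{eq:define_a} with the stated $R$ and $\ell_0$ lie in the regime where \cref{lemma:work_bound_full,lemma:full_error_bound} both give their optimal rates, and that the base-refinement work term $h_{\ell_0}^{-(\alpha/\beta+d)}$ is genuinely subdominant to $h_L^{-(\alpha/\beta+d-1)}$ once $h_{\ell_0}$ is expressed in terms of $h_L$. Everything else is a direct substitution of $h_L = \mathcal{O}(\varepsilon^{1/\alpha_p})$ into the work bound, after which the proof reduces — as the authors indicate — to invoking \cref{lemma:criteria_work_bound}.
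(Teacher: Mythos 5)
Your proposal follows the same chain of reasoning as the paper: invoke \cref{lemma:criteria_work_bound} to verify that the prescribed $\{a_\ell\}$ and $\ell_0$ satisfy \cref{eq:a_work_upper_bound,eq:a_error_lower_bound}, feed these into \cref{lemma:work_bound_full} and \cref{lemma:full_error_bound}, and then substitute $h_L = \mathcal{O}(\varepsilon^{1/\alpha_p})$. The paper's proof is just a citation of \cref{lemma:criteria_work_bound}, but the assembly you carry out is exactly what that citation is meant to invoke, so the route is identical.

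You are right to single out the pre-refinement work term $b\,h_{\ell_0}^{-(\alpha/\beta+d)}$ as the delicate point, but the justification you sketch for its subdominance does not quite close the gap. Writing $\kappa = \alpha/\beta + d$ and $\tau = p/(R(p+1))$, the requirement that this term be bounded by $h_L^{-(\kappa-1)}$ is $\kappa(1-\tau)\leq \kappa-1$, i.e.\ $\kappa\tau\geq 1$, i.e.\ $R\leq \kappa\,p/(p+1)$. The stated constraint $R<\alpha_p=\alpha p/(p+1)$ gives $\tau>1/\alpha$ and hence $\kappa\tau>\kappa/\alpha$, but this exceeds $1$ only when $\alpha/\beta+d\geq\alpha$. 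That condition holds automatically for $\beta\leq 1$ (which covers the Monte Carlo experiments in \cref{section:numerics}, where $\beta=1/2$), but it is not implied by the theorem's hypotheses: in the deterministic limit $\beta\to\infty$ with $d<\alpha$ (say $d=1$, $\alpha=2$, $p=\infty$) one has $\kappa<\alpha$, so taking $R$ close to $\alpha_p$ makes $h_{\ell_0}^{-(\alpha/\beta+d)}$ the dominant work term and the claimed rate $\mathcal{O}(\varepsilon^{-(d-1+\alpha/\beta)/\alpha_p})$ fails. To make the argument airtight one must either strengthen the constraint to $1<R\leq\min\{\alpha_p,(\alpha/\beta+d)p/(p+1)\}$, or add $\alpha\leq\alpha/\beta+d$ as a hypothesis. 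To your credit you flagged this as ``the one inequality I would check carefully'' --- and indeed it is; the paper's one-line proof passes over it entirely.
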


By our analysis in the aforementioned sections, this rate is optimal (in the sense of \cref{remark:work_optimal}) for any adaptive scheme utilising the same grid structure and cell-refinement method. 
Similarly, one can calculate that the cost-complexity of a uniform refinement is
$
   \mathcal{O} \left( \varepsilon^{-(d + {\alpha}/{\beta} ) / \alpha_p} \right)
$
so that our adaptive refinement scheme attains a cost-complexity reduction of
$
   \mathcal{O} \left( \varepsilon^{-1/\alpha_p} \right)
$
compared to a uniform (non-adaptive) approximation.

\section{Numerical Results} \label{section:numerics}

In this section, we present numerical results which verify that the complexity results, as well as the asymptotic work and error rates, from \cref{section:complexity_analysis} hold in practice. \Cref{subsection:results_two_functions} focuses on estimating level-sets of well-known continuous optimization functions with complex level-set structures \cite{TestFunctionIndex}, whilst \cref{subsection:numerics_failure_region_hyperelastic_beam} focuses on estimating the failure region for a hyperelastic beam. The approximation of failure probabilities often requires rare-event estimation techniques such as importance sampling. Here we do not consider a rare-event estimation problem for the sake of simplicity. Nevertheless, we remark that \cref{algo:main_algo} is perfectly compatible with rare event estimation methods.

We begin by describing the part of our setup which is common among all test
problems:

\begin{itemize}
    \item We take the refinement criterion defined in \eqref{eq:define_a1} and we employ piecewise-bilinear discontinuous Lagrange interpolants over boxed cells to approximate the target function. Consequently, we can set $\alpha=2$ for all problems. We always start with an initial uniform mesh of sufficiently small mesh size, cf. \cref{remark:vartheta_general}.
    \item When the exact solution is known (first two test problems) we compute
        the error measure \eqref{eq:error_metric} using $512$
        randomized quasi-Monte Carlo points\footnote{The scrambled Sobol' sequence implementation
        of the SciPy library,
    cf.~\url{https://docs.scipy.org/doc/scipy/reference/generated/scipy.stats.qmc.Sobol.html}.}
        per cell. To compute the expected error we run Algorithm 1 independently
        $N_{\text{MC}}$ times until the standard deviation of the estimated error is
        acceptably small.
    \item In terms of software implementation, we employ the open-source finite
    element libraries MFEM \cite{mfem} and FEniCSx \cite{baratta2023dolfinx}. We
    adopt the former for local mesh refinements and the latter for finite
    element computations and post-processing.
\end{itemize}

The open-source code for the numerical experiments is freely available on GitHub at \url{https://github.com/croci/contourest}.

\subsection{Two Functions with Complex Level-set Structures} \label{subsection:results_two_functions}

Here we consider estimating the zero level-sets of two functions with complex level-set structures. First, we consider a $2$-dimensional drop-wave function of the form
\begin{align*}
    \frac{1}{5}-\dfrac{1 + \cos(12\lVert x \rVert_2)}{\lVert x \rVert_2^2/2 + 2},\quad x \in[-5,5]^2.
\end{align*}
We also consider a (scaled and shifted) $3$-dimensional Styblinski-Tang function of the form
\begin{align*}
    \frac{1}{122}\left(\sum_{i=1}^3 x_i^4 -16x_i^2 + 5x_i\right) + 1, \quad x \in [-5,5]^3.
\end{align*}

In order to make the approximation problem stochastic, we add zero-mean independent Gaussian noise to each point evaluation of both functions. We use Gaussian noise with variances $h_{\ell}^{2}$ and $h_{\ell}^{2}/3$ for the drop-wave and Styblinski-Tang functions respectively, with corresponding grid sizes of $h_\ell=2^{-(\ell+6)}$ and $h_\ell=2^{-(\ell+2)}$. This roughly corresponds to the noise level of averaging $M_\ell=h_\ell^{\alpha/\beta}$ Monte Carlo samples of a random variable with unit and $\frac{1}{3}$ variance respectively (by the Central Limit Theorem, Monte Carlo estimates are asymptotically Gaussian). In both cases, we set $\beta=1/2$ and $p=\infty$ to represent this Monte Carlo-like setting. We compute the expected error by averaging $N_{\text{MC}}=10$ and $N_{\text{MC}} = 1024$ independent runs of the algorithm in for the 2D and 3D problem respectively.

We show the approximation and grid size resulting by setting ${L} = 6$ in \cref{algo:main_algo} in \cref{figs:dropwave_results} for the drop-wave function and \cref{figs:tang_results} for the Styblinski-Tang function. It is evident from these figures that the target level-set is qualitatively well approximated and that the resulting mesh is automatically adaptively refined only around the level set (or in areas in which the function is small). 
In \cref{fig:styblinski_total_complexity} we present more quantitative results: we plot the complexity rates of the algorithm over different values of ${L}$ for both of the considered functions. These results match the expected rates from \cref{theorem:final_complexity} and support our theoretical findings.

\begin{figure}[ht]
    \begin{subfigure}{.48\textwidth}
        \centering
        \includegraphics[height=0.75\linewidth]{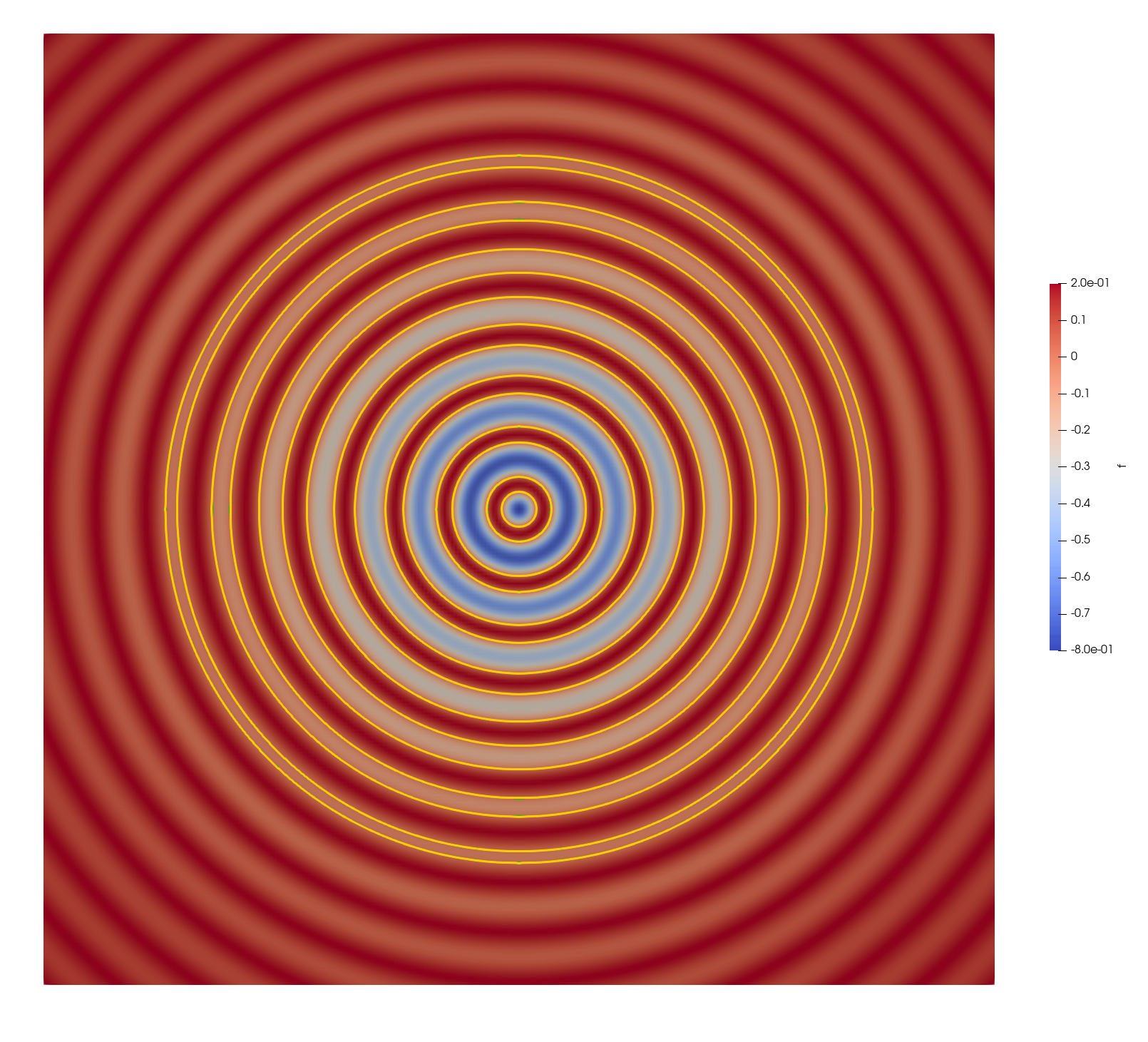}
        \caption{True (orange) and approximated (green) level-sets.}
        \label{fig:2D_level_set}
    \end{subfigure}%
    \hfill
    \begin{subfigure}{.48\textwidth}
        \centering
        \includegraphics[height=0.75\linewidth]{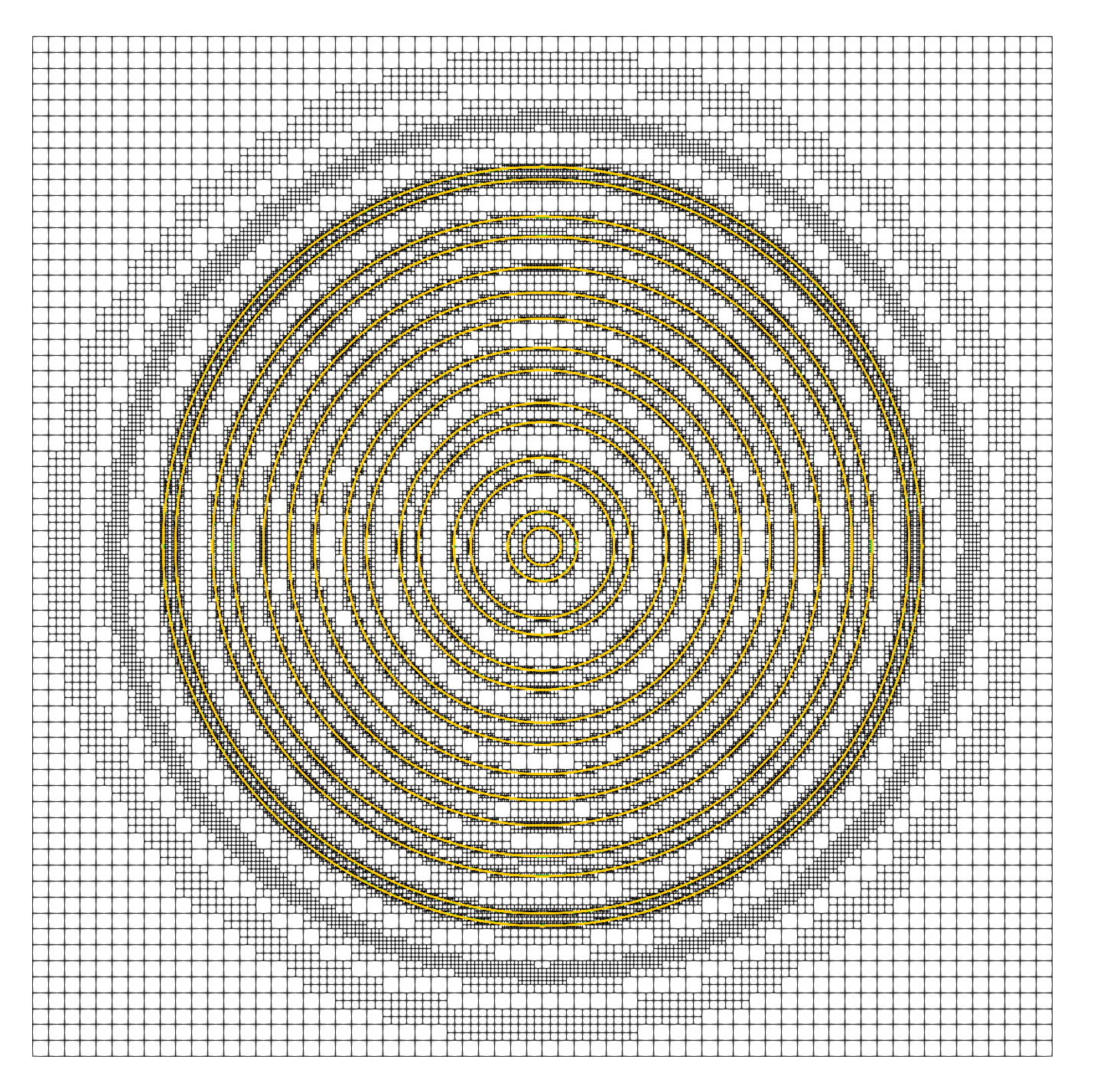}
        \caption{Final adapted grid.}
        \label{fig:2D_mesh}
    \end{subfigure}%
    \\
    \caption{Results of the adaptive algorithm for the 2-dimensional drop-wave function with ${L} = 6$. One can see heuristically that the level-set is well approximated and cells are only refined close to the target set.}
    \label{figs:dropwave_results}
\end{figure}

\pgfplotsset{
  width=0.49\textwidth,
  height=5.8cm,
  grid=major,
}

\begin{figure}[htp]
    \begin{subfigure}{.48\textwidth}
        \centering
        \includegraphics[height=0.75\linewidth]{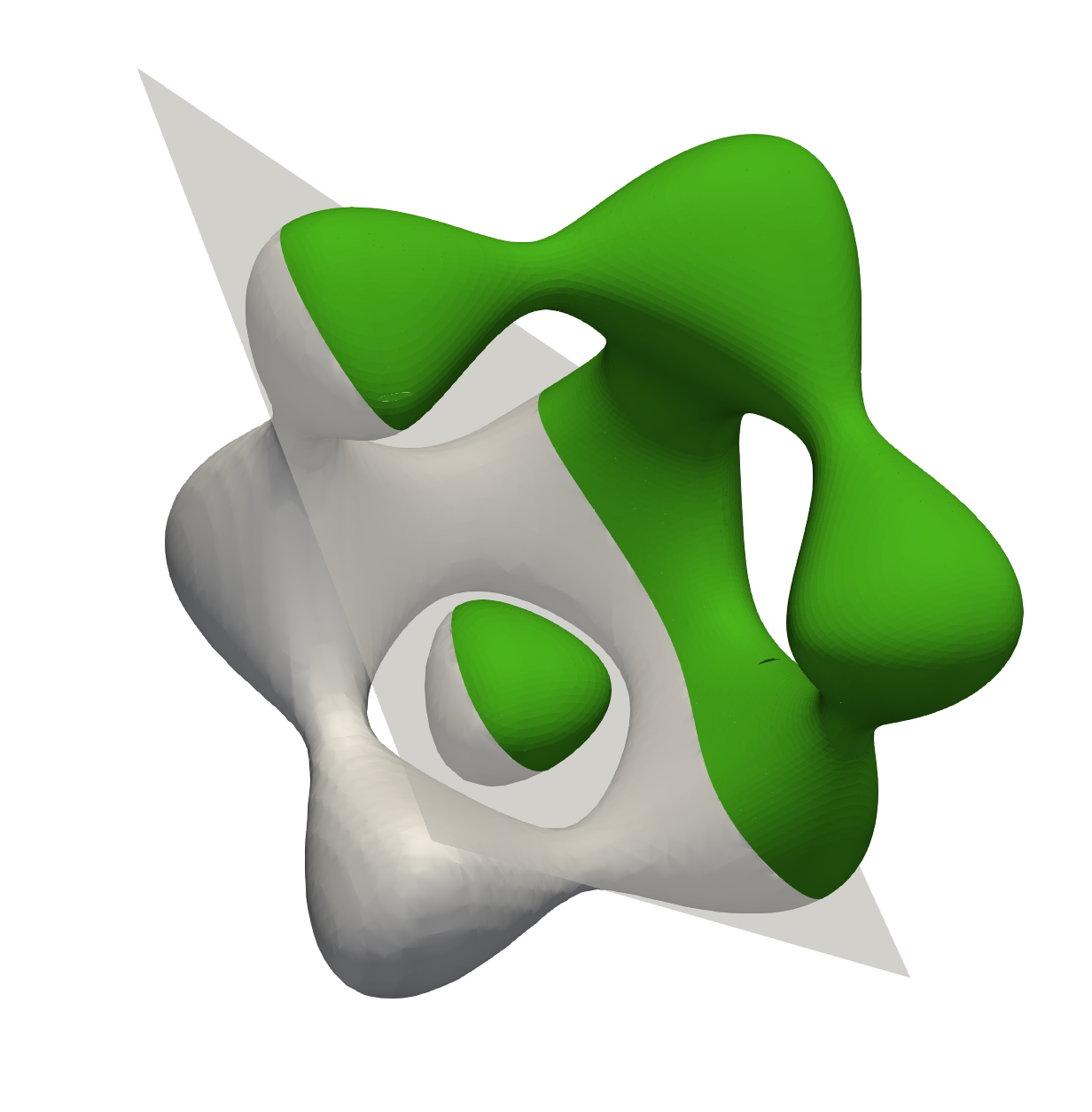}
        \caption{True (white) and approximate (green) level-sets.}
        \label{fig:3D_level_set}
    \end{subfigure}%
        \hfill
    \begin{subfigure}{.48\textwidth}
        \centering
        \includegraphics[height=0.75\linewidth]{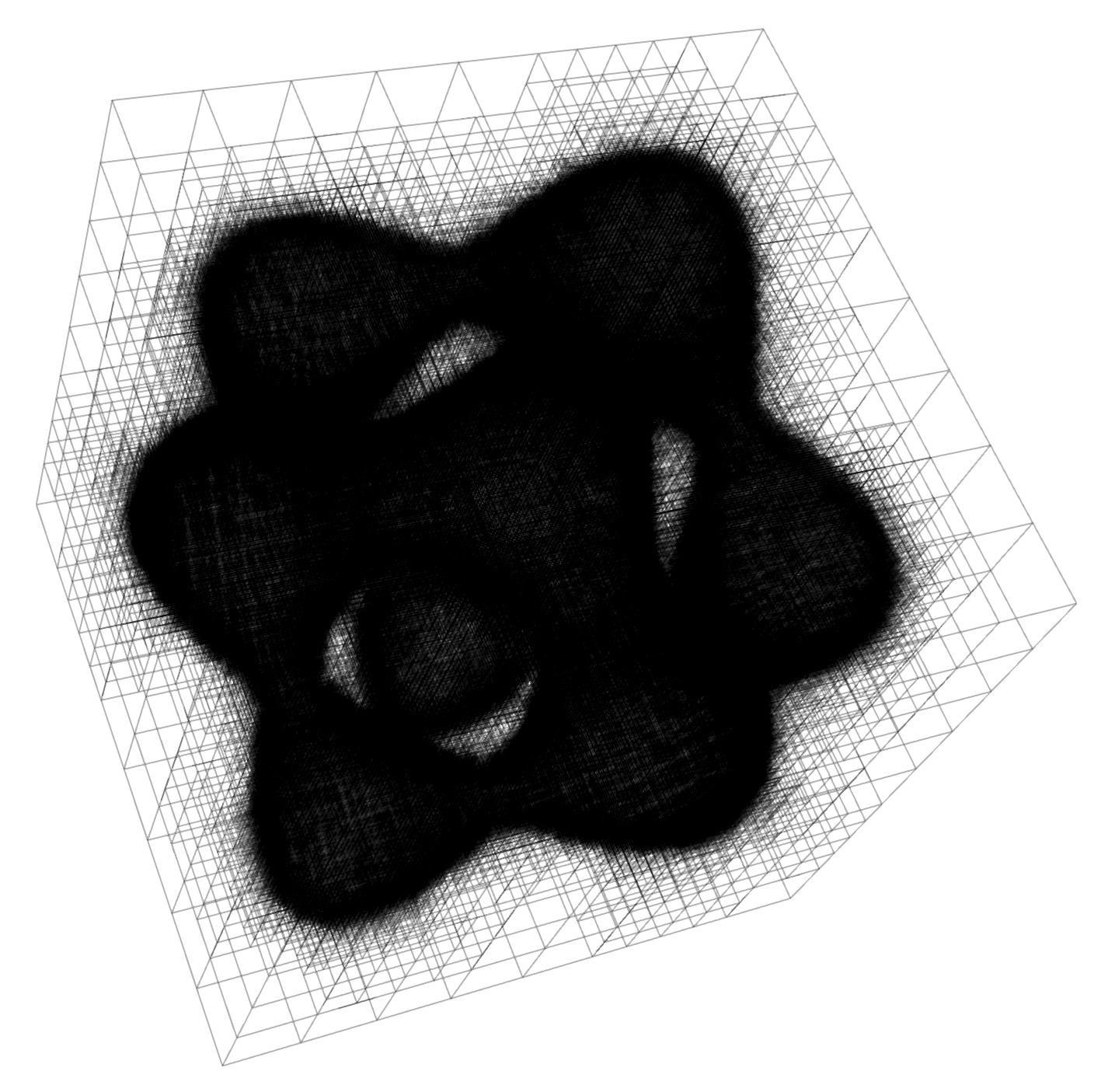}
        \caption{Final adapted grid.}
        \label{fig:3D_mesh}
    \end{subfigure}%
    \\
    \caption{Results of the adaptive algorithm for the 3-dimensional Styblinski-Tang function with ${L} = 6$. One can see heuristically that the level-set is well approximated and cells are only refined close to the target set.}
    \label{figs:tang_results}
\end{figure}

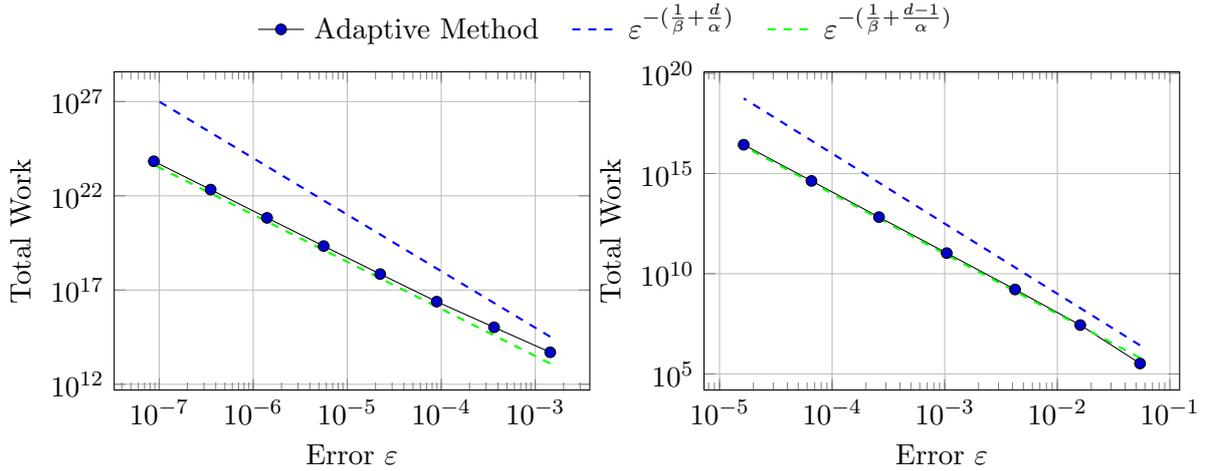
\begin{figure}[ht]
    \centering
    \begin{tabular}{ccc}
        \ref{plot:complexity_adaptive} Adaptive Method
		& \ref{plot:complexity_full_rate} \(\varepsilon^{-(\frac{1}{\beta} + \frac{d}{\alpha})}\) 
        & \ref{plot:complexity_adapt_rate} \(\varepsilon^{-(\frac{1}{\beta} + \frac{d-1}{\alpha})}\) 
	\end{tabular}
    \begin{tikzpicture}
        \begin{axis}[
            grid = major,
            xlabel={Error $\varepsilon$},
            ylabel={Total Work},
            ymode=log,
            xmode=log,
            legend style={at={(0.99,0.99)},anchor=north east,fill opacity=0.8, draw opacity=1, text opacity=1, fill=white,draw=black},
            x tick label style={/pgf/number format/fixed},
            y tick label style={/pgf/number format/sci}
        ]
        
        \addplot+[
            mark=*,
            black,
            error bars/.cd,
                y dir=both, y explicit,
        ] coordinates {
                (1.44053996e-03, 4.99336320e+13) +- (1.20976323e-06, 2.24116379e+09)
                (3.64439934e-04, 1.05470358e+15) +- (4.21522548e-07, 8.54605158e+10)
                (8.94840807e-05, 2.42346910e+16) +- (1.16264633e-07, 1.20836305e+10)
                (2.24017538e-05, 7.08246981e+17) +- (2.17362788e-08, 1.96732903e+12)
                (5.61439665e-06, 2.15952303e+19) +- (6.86972821e-09, 8.63075724e+13)
                (1.40444899e-06, 6.74971144e+20) +- (2.33060905e-09, 4.20572652e+14)
                (3.52824281e-07, 2.13254165e+22) +- (4.23010184e-10, 1.64345823e+15)
                (8.77202183e-08, 6.78732568e+23) +- (1.37845088e-10, 5.12596299e+17)
        };
            \label{plot:complexity_adaptive}

            \addplot [thick, dashed, draw=blue, domain=1.44053996e-03:8.77202183e-08] {10^(6) * x^(-(2+1))};
            \label{plot:complexity_full_rate}

            \addplot [thick, dashed, draw=green, domain=1.44053996e-03:8.77202183e-08] {10^(6) * x^(-(2+ (2-1)/2 ))};
            \label{plot:complexity_adapt_rate}
        \end{axis}
    \end{tikzpicture}\begin{tikzpicture}
        \begin{axis}[
            grid = major,
            xlabel={Error $\varepsilon$},
            ylabel={Total Work},
            ymode=log,
            xmode=log,
            legend style={at={(0.99,0.99)},anchor=north east,fill opacity=0.8, draw opacity=1, text opacity=1, fill=white,draw=black},
            x tick label style={/pgf/number format/fixed},
            y tick label style={/pgf/number format/sci}
        ]
        
        \addplot+[
            mark=*,
            black,
            error bars/.cd,
                y dir=both, y explicit,
        ] coordinates {
            (5.42710349e-02, 3.31776000e+05) +- (1.64722030e-05, 1.22070312e-04)
            (1.59917548e-02, 2.74754604e+07) +- (2.66256140e-06, 6.51312312e+03)
            (4.21460107e-03, 1.63391919e+09) +- (4.91055321e-07, 7.74424451e+04)
            (1.04138395e-03, 1.06717674e+11) +- (9.25692434e-08, 3.86366400e+06)
            (2.60842483e-04, 6.69036182e+12) +- (1.77295335e-08, 5.79752012e+07)
            (6.55268647e-05, 4.27547672e+14) +- (3.94362666e-09, 2.82819538e+09)
            (1.63771255e-05, 2.72806540e+16) +- (9.27546960e-10, 2.32197530e+11)
        };

            \addplot [thick, dashed, draw=blue, domain=5.42710349e-02:1.63771255e-05] {10^(2) * x^(-(2+1.5))};

            \addplot [thick, dashed, draw=green, domain=5.42710349e-02:1.63771255e-05] {10^(2) * x^(-(2+ 1 ))};
        \end{axis}
    \end{tikzpicture}
    \caption{Overall algorithm complexity for the $2$-dimensional drop-wave function (left) and the $3$-dimensional Styblinski-Tang function (right) over different values of $L$. These rates match the theoretical rates in \cref{theorem:final_complexity}.}
    \label{fig:styblinski_total_complexity}
\end{figure}

\subsection{Failure Region of a Hyperelastic Beam Modelled by a PDE with Random Field Coefficients} \label{subsection:numerics_failure_region_hyperelastic_beam}

Here, we consider the problem of approximating the failure region of a beam made
of an unknown/uncertain hyperelastic material. The objective here is to
determine the parameter region (beam shape and applied traction force) in which
the beam displacement would exceed a given failure threshold with high
probability. The deterministic PDE problem setup and description has been
adapted from the FEniCSx open-source software library \cite{baratta2023dolfinx}
documentation\footnote{See \url{https://docs.fenicsproject.org/}.}, and from
J.~S.~D\o kken's Dolfinx tutorial \cite{dokken_fenicsx}.

We consider a 2-dimensional hyperelastic beam of length $\bar{L}$ and thickness $2w$. Let ${A}=[0,\bar{L}]\times[-w,w]\subset \mathbb{R}^2$ be the 2-dimensional domain that corresponds to the beam at rest. The beam displacement $u: {A} \rightarrow \mathbb{R}^2$ is the vector field that minimizes the total potential energy $\Pi(u)$ of the beam:
\begin{align*}
    \min_{u \in V} \Pi(u) = \min_{u \in V}\left(\int_{{A}} \psi(u) \, \mathrm{d}x
    - \int_{{A}} B \cdot u \, \mathrm{d}x
    - \int_{\partial {A}} T \cdot u \, \mathrm{d}s\right)
\end{align*}
Here $V$ is a suitable function space possibly incorporating Dirichlet boundary conditions,
$\psi(u)$ is the elastic stored energy density, $B$ is a body force (per unit reference volume), and $T$ is a traction force (per unit reference area). 

The beam displacement is found by setting the directional derivative of $\Pi$ to zero for all directions $v\in V$, i.e.,
\begin{align*}
    a(u; v) = D_{v} \Pi = \lim_{\epsilon\rightarrow 0} \frac{d \Pi(u + \epsilon v)}{\epsilon} = 0, \quad \forall v \in V.
\end{align*}
The above identifies a \emph{nonlinear} variational problem: Find $u\in V$ such that
\begin{align}
    \label{eq:hyperelastycity_variational_problem}
    a(u,v) = 0, \quad\forall v \in V.
\end{align}

We define the elastic stored energy density $\psi(u)$ as follows. First, we introduce the deformation gradient
\[
    F(u) = I + \nabla u,
\]
where $I\in\mathbb{R}^{d\times d}$ is the identity matrix.
Then we define the right Cauchy-Green tensor
\[
    C(u) = F^{T}(u) F(u),
\]
and the scalar quantities (called invariants),
\begin{align*}
    J(u) = \det(F(u)), \quad I_{c}(u) = \mathrm{trace}(C(u)).
\end{align*}
We can now introduce the following neo-Hookean model for $\psi$:
\begin{align*}
    \psi(u) =  \frac{\mu}{2} (I_{c}(u) - 3) - \mu \ln(J(u)) + \frac{\lambda}{2}\ln(J(u))^{2},
\end{align*}
where $\lambda$ and $\mu$ are the Lam\'e parameters which are expressed in terms of the Young's modulus $E$ and Poisson ratio $\nu$:
\begin{align*}
    \lambda = \frac{E \nu}{(1 + \nu)(1 - 2\nu)},\quad \mu = \frac{E}{2(1 + \nu)}.
\end{align*}

For our test problem we set $\nu=0.31$ and assume that the material composition of the beam is uncertain and modelled by letting the Young's modulus be a random field, i.e., $E=E(x,\omega)$, where, for all $x$, $E(x,\cdot)$ is a gamma-distributed random variable with mean $\mu_E = 10^7$ and standard deviation $\sigma_E = \frac{3}{4}\times 10^7$ (we consider dimensionless quantities here). We sample realizations of $E$ as follows:
\begin{align*}
    E(x,\omega) = F_\gamma^{-1}(\Phi(z(x,\omega))),
\end{align*}
where $F_\gamma^{-1}(t)$ is the inverse cumulative density function of a gamma
random variable with mean $\mu_E$ and standard deviation $\sigma_E$, $\Phi$ is
the standard Gaussian cumulative density function, and $z(x,\omega)$ is a
zero-mean unit variance Mat\'ern Gaussian field with correlation length
$\bar{\lambda} = 0.1$ and smoothness parameter $\bar{\nu} = 2$. We sample the
Mat\'ern field using the stochastic PDE approach by Croci et
al.~\cite{croci2018efficient,croci2021multilevel}. We remark that due to the
uncertainty in the Lam\'e coefficients, the solution $u$ becomes itself a random
field $u(x,\omega)$.

\textbf{Level-set approximation problem.}
We set $\bar{L}=1$, $r=\bar{L}/w$ (the beam aspect ratio), $B = [0,-9.81]$ (gravity body
force), and $T=[0,\bar{T}]$. We let $\theta = \{r,\bar{T}\}$ be the set of
variable parameters with respect to which we want to approximate the failure
probability. Let $u_f>0$ be a given failure displacement threshold. We define the beam failure probability as
\(
    \mathbb{P}(\lVert u \rVert_{L^{\infty}(A)} > u_f)
\)
wherein both $u$ and $A$ depend on the parameter set $\theta$.

Our objective is to approximate the failure region of the beam, i.e. the
${\theta}$-parameter region in which the beam will not fail with high enough
probability. By setting the failure probability to be $\bar{p}=0.05$ and $u_f = 0.01$, we define the boundary of the failure region as the zero level-set 
\begin{equation*}
        \mathcal{L}_0
    = 
        \left\{
            {\theta} \in [25,75]\times[-100,100]
            :
            f(\theta) := \mathbb{P}(\lVert u \rVert_{L^{\infty}(A)} > u_f) - \bar{p} = 0
        \right\}
\end{equation*}
where the norm within $f$ varies with the set of parameters ${\theta} = (r,\bar{T})$.

\textbf{Implementation details.}
We start from an initial uniform mesh of size $h_0=2^{-3}$. For estimating the
failure probability we use the standard Monte Carlo method which leads to
$\beta=\frac{1}{2}$ and $p=\infty$ (the Monte Carlo statistical error is
asymptotically Gaussian). We set the number of samples on each level to be
$M_\ell=2^{\frac{\alpha}{\beta}(\ell+1)}$. We leverage the capabilities
of the open-source BLUEST Monte Carlo software library from
\cite{croci2023multi}, which employs MPI for parallel Monte Carlo sampling.
Each Monte Carlo sample requires solving a realization of the nonlinear
variational problem \eqref{eq:hyperelastycity_variational_problem}, which we
discretize using the finite element method. In particular, we use piecewise-quadratic
continuous Lagrange finite elements for all variables over a fixed uniform
finite element simplicial mesh of $200$ cells.  We employ the FEniCSx
open-source finite element library \cite{baratta2023dolfinx} and the PETSc
Newton solver \cite{petsc-user-ref} (basic linesearch, relative and absolute
tolerances set to $10^{-8}$, and $100$ maximum iterations). In order to increase
the robustness of the Newton solver, we provide the solution to the
corresponding linear elasticity equations\footnote{Obtained by replacing the
nonlinear elastic stored energy density $\psi$ with the linear density
$$\psi_{\text{lin}}(u) = \mu(\nabla u + \nabla u^T) + \lambda (\nabla \cdot
u)I.$$} as initial guess. 

\textbf{Numerical results.}
The results of the adaptive algorithm with ${L} = 5$ adaptive refinements are shown in \cref{fig:hyperelasticity_level_set_new}, from which one can see heuristically that the target level-set is well approximated, though the approximated level-set is non-smooth due to the discontinuous approximants used. In contrast, \cref{fig:hyperelastic_beam_complexity} shows the work rate of the algorithm over different values of ${L}$, from which we can see that the expected work rate from \cref{lemma:work_bound_full} are attained.
\begin{figure}[!ht]
    \centering
    \includegraphics[width=0.45\linewidth]{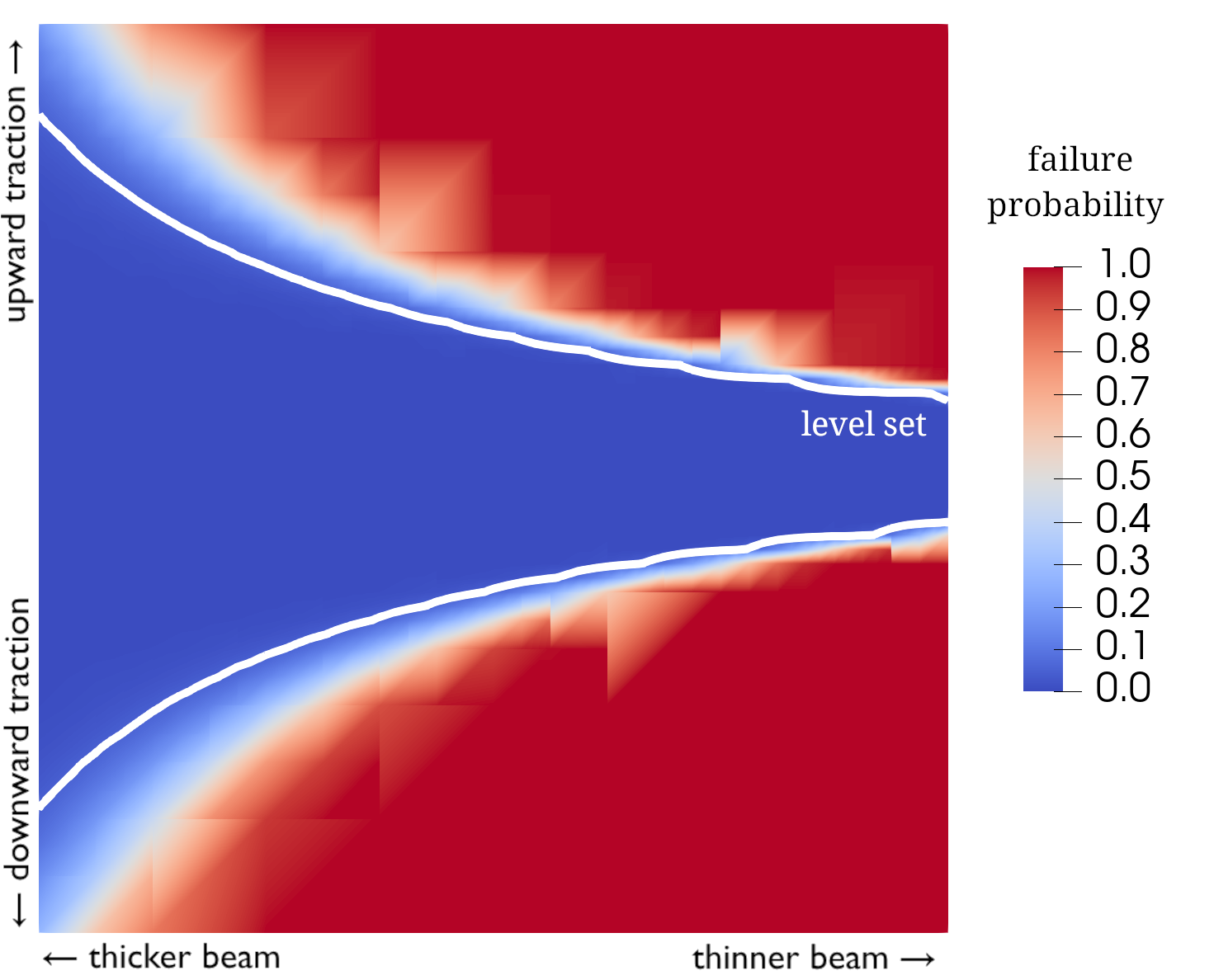}
    \hspace{0.02\linewidth}
    \includegraphics[width=0.36\linewidth]{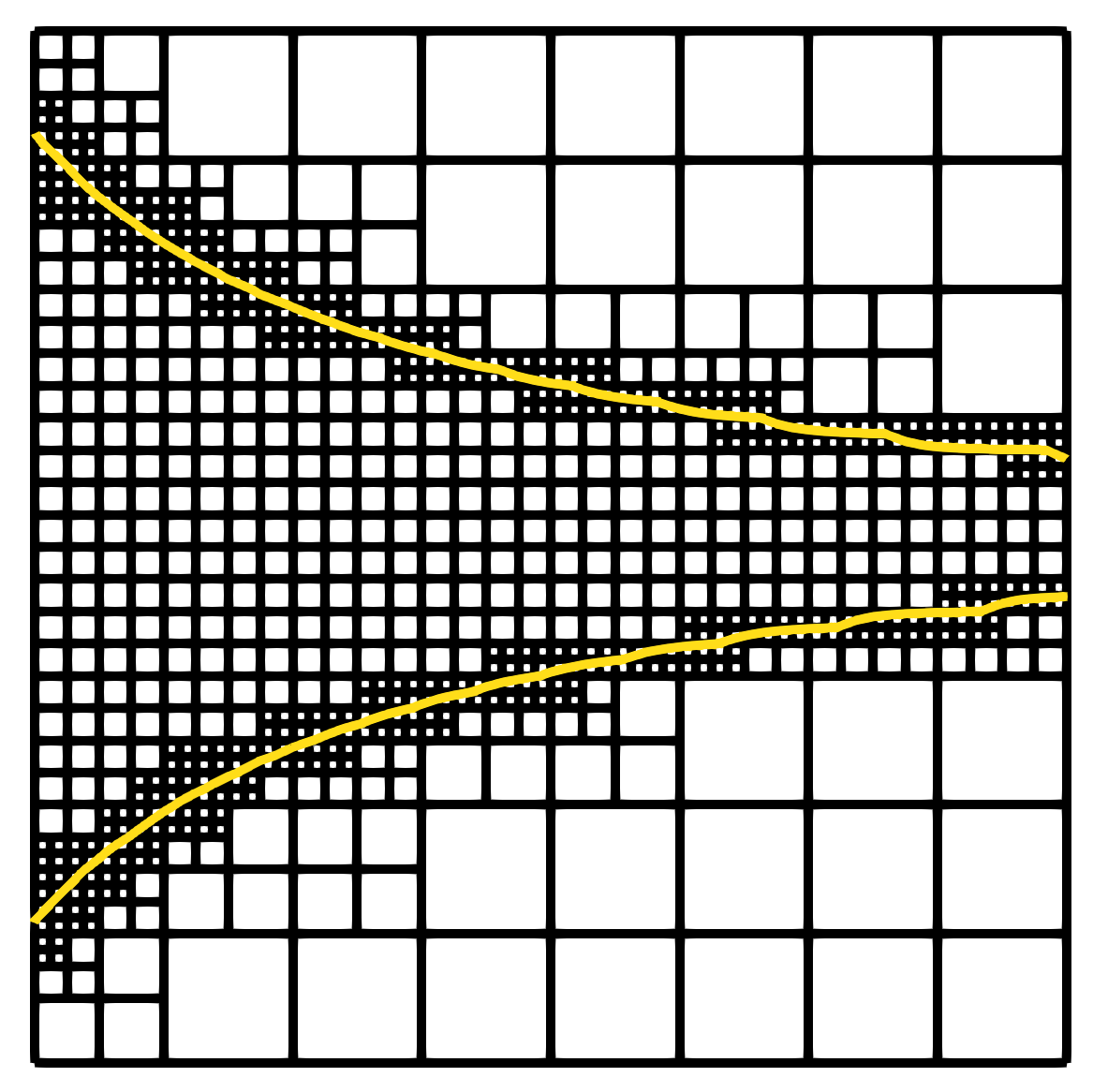}
    \caption{On the left, the finest approximation obtained of the $0.05$-level set of the hyperelastic beam failure probability. On the right, the corresponding adapted grid. Note that the mesh size is fine close to the level set and coarse away from it, and thus the approximant obtained is consequently only accurate close to the level set.}
    \label{fig:hyperelasticity_level_set_new}
\end{figure}

\begin{figure}[ht]
    \centering
        \begin{tabular}{ccc}
        \ref{plot:hyperelastic_adaptive} Adaptive Method
		& \ref{plot:hyperelastic_full_rate} \(2^{(\alpha/\beta + d) {L}}\) 
        & \ref{plot:hyperelastic_adapt_rate} \(2^{(\alpha/\beta + d-1) {L}}\)
	\end{tabular}
    \begin{tikzpicture}
        \begin{axis}[
            xlabel={${L}$},
            ylabel={Total Work},
            ymode=log,
            grid=major,
            legend style={at={(0.01,0.99)},anchor=north west,fill opacity=0.8, draw opacity=1, text opacity=1, fill=white,draw=black},
            xtick={1,2,3,4},
            x tick label style={/pgf/number format/fixed},
            y tick label style={/pgf/number format/sci}
            ]
            \addplot+[
                mark=*,
                black,
                error bars/.cd,
                    y dir=both, y explicit,
            ] coordinates {
                (1,1.2960000e+03)
                (2,4.7776000e+04)
                (3, 2.2076640e+06)
                (4, 4.3896768e+07)
            };
            \label{plot:hyperelastic_adaptive}
            
            \addplot [thick, dashed, draw=blue, domain=1:4] {2*10^(2) * 2^((4 + 2)  * x)};
            \label{plot:hyperelastic_full_rate}
        
            \addplot [thick, dashed, draw=green, domain=1:4] {4*10^(1) * 2^((4 + 1)  * x))};
            \label{plot:hyperelastic_adapt_rate}
        \end{axis}
    \end{tikzpicture}
    \begin{tikzpicture}
        \begin{axis}[
            xlabel={${L}$},
            ylabel={CPU time (s)},
            ymode=log,
            grid=major,
            legend style={at={(0.01,0.99)},anchor=north west,fill opacity=0.8, draw opacity=1, text opacity=1, fill=white,draw=black},
            xtick={1,2,3,4},
            x tick label style={/pgf/number format/fixed},
            y tick label style={/pgf/number format/sci}
            ]
            \addplot+[
                mark=*,
                black,
                error bars/.cd,
                    y dir=both, y explicit,
            ] coordinates {
                (1, 5.2973e+00)
                (2, 3.3702e+01)
                (3, 1.1434273e+03)
                (4, 2.00366994e+04)
            };
            
            \addplot [thick, dashed, draw=blue, domain=2:4] {2*10^(-2) * 2^((4 + 2)  * x)};
        
            \addplot [thick, dashed, draw=green, domain=2:4] {2*10^(-2) * 2^((4 + 1)  * x))};
        \end{axis}
    \end{tikzpicture}
    \caption{Total algorithm work (left) and CPU time (right) vs. ${L}$ for the  hyperelastic beam problem. These rates reflect the theoretical ones from \cref{lemma:work_bound_full}.}
    \label{fig:hyperelastic_beam_complexity}
\end{figure}
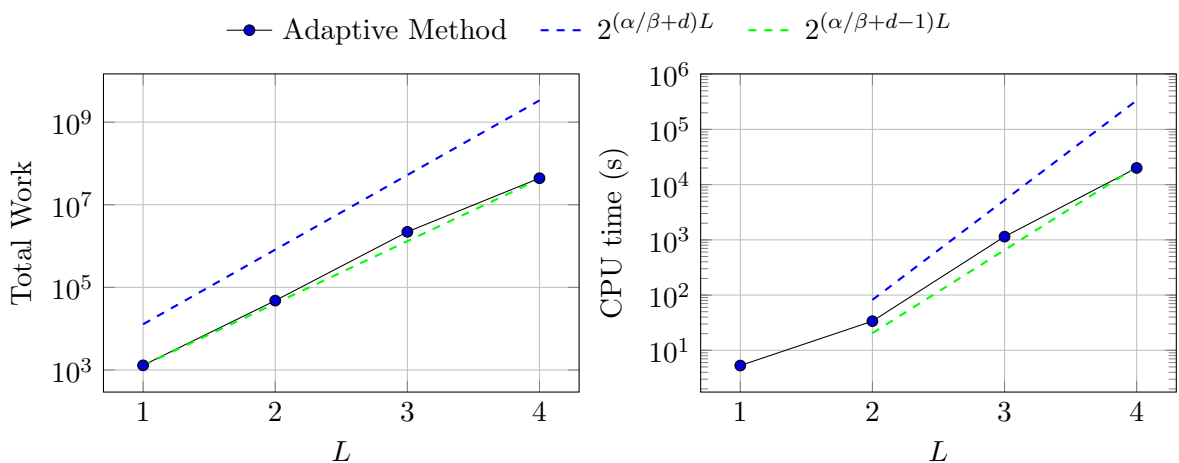

\section{Conclusion} \label{section:conclusion}

In this paper, we have presented an efficient grid-based adaptive sampling algorithm for approximating zero level-sets of possibly noisy functions with unrestrictive assumptions. \cref{algo:main_algo} is robust to noise and is perfectly compatible with advanced Monte Carlo methods such as QMC, MLMC, and MLQMC. This ensures that the presented algorithm can be utilised in a wide variety of applications such as the approximation of failure domains in uncertainty quantification.

The in-depth complexity analysis of the algorithm presented here, backed by numerical experiments, shows that our algorithm attains the same error rate as uniform refinement in $d$ dimensions whilst attaining a more efficient work rate; one that matches our best-case benchmark. \cref{algo:main_algo} is also readily applicable since the refinement criteria can computed with only a priori $L^p$ error rates, and we utilise local approximations so that the algorithm is applicable when global approximation is unavailable. We also provide guidance on how a posteriori error estimates may be utilised with this algorithm (with only minor adjustments) to achieve better constants.

The algorithm presented here ought to be applicable to level-sets of general dimension. Indeed our work analysis, with $d-1$ replaced with the level-set dimensionally (say $d_0$) in the rate, holds if one modifies \cref{assumption:f_int_bound} by changing the RHS to $\rho_0 a^{d-d_0}$. The limitation of the complexity analysis presented here lies in the level-set error metric, which only provides information for level-sets of dimension $d-1$. Hence, a clear avenue for future work is to establish the assumptions required to obtain optimal convergence in more general level-set error metrics.

\section*{Acknowledgements}
The authors would like to acknowledge helpful discussions with R.~D.~Moser and K.~E.~Willcox in the early stages of this project.

M. Croci's work is supported by grant PID2023-146668OA-I00 funded by MICIU / AEI / 10.13039 / 501100011033 and cofunded by the European Union and by grant RYC2022-036312-I funded by MICIU / AEI / 10.13039 / 501100011033 and by ESF+. M. Croci is also supported by the Basque Government through the BERC 2022-2025 program, and by the Ministry of Science and Innovation: BCAM Severo Ochoa accreditation CEX2021-001142-S / MICIN / AEI / 10.13039 / 501100011033.

A.-L. Haji-Ali acknowledges the support of support from the Alexander von Humboldt Foundation through the Experienced Researcher Fellowship.

I. C. J. Powell was supported by the EPSRC Centre for Doctoral Training in Mathematical Modelling, Analysis and Computation (MAC-MIGS) funded by the UK Engineering and Physical Sciences Research Council (grant EP/S023291/1), Heriot-Watt University and the University of Edinburgh.

\bibliography{ref}




\end{document}